\documentclass[11pt]{article}

\usepackage[T1]{fontenc}
\usepackage[utf8]{inputenc}
\usepackage{lmodern}
\usepackage[margin=1in]{geometry}
\usepackage{amsmath,amssymb,amsthm,mathtools}
\usepackage{booktabs}
\usepackage{array}
\usepackage{enumitem}
\usepackage{xcolor}
\definecolor{linkblue}{rgb}{0.0,0.12,0.45}
\usepackage[colorlinks=true,linkcolor=linkblue,citecolor=linkblue,urlcolor=linkblue]{hyperref}
\usepackage[nameinlink,capitalise,noabbrev]{cleveref}
\usepackage{microtype}

\hypersetup{
  pdftitle={Morphogenesis Across n: Overlays, Emergence Thresholds, and Weak Self-Similarity in the Partition Graph},
  pdfauthor={Fedor B. Lyudogovskiy}
}

\newtheorem{theorem}{Theorem}[section]
\newtheorem{proposition}[theorem]{Proposition}
\newtheorem{corollary}[theorem]{Corollary}
\newtheorem{lemma}[theorem]{Lemma}
\theoremstyle{definition}
\newtheorem{definition}[theorem]{Definition}
\newtheorem{problem}[theorem]{Problem}
\theoremstyle{remark}
\newtheorem{remark}[theorem]{Remark}

\title{Morphogenesis Across $n$: Overlays, Emergence Thresholds, and Weak Self-Similarity in the Partition Graph}
\author{Fedor B. Lyudogovskiy}
\date{}

\begin{document}
\maketitle

\begin{abstract}
We study the partition graphs $G_n$ as a growing family of discrete geometric objects and develop a formal framework for comparing their structures across different levels. The main tool is a family of Ferrers-translation maps
\[
T_\tau:G_n\to G_{n+k},
\qquad (T_\tau(\lambda))'=\lambda'+\tau',
\]
defined for fixed partitions $\tau\vdash k$. We prove that these maps are induced graph embeddings, which gives a rigorous notion of translation overlay: an induced copy of $G_n$ inside $G_{n+k}$. As a consequence, every finite rooted induced motif persists to all higher levels under translation overlays, and overlay-monotone finitely witnessed properties admit stable emergence thresholds.

We then apply this framework to the growth of local complexity. In particular, we show that the maximum degree, the maximum local clique number, and the maximum local simplex dimension are nondecreasing in $n$, so that lower-bound thresholds for these extremal quantities are automatically stable. We also introduce a small canonical family of motifs drawn from several structural zones of the partition graph family: two boundary-corner motifs and three theorem-safe weak rooted templates representing axial, rectangular rear-root, and square-based rear-contour behavior. For these objects we obtain rigorous occurrence statements, explicit upper bounds on first appearance, and stable-threshold statements.

Alongside the theorem-bearing threshold theory, we distinguish a broader atlas layer of first observed occurrences: richer axial bridges, longer spine fragments, nonminimal rear motifs, and other repeated patterns not yet covered by strict transport results. This yields a conservative framework for discussing morphogenesis across $n$, separating exact structural persistence from heuristic resemblance and supporting a weak notion of self-similarity based on recurring finite motifs and repeated local fragments.
\end{abstract}

\medskip
\noindent\textbf{Keywords.} partition graph, integer partitions, Ferrers diagrams, overlays, emergence thresholds, weak self-similarity.

\medskip
\noindent\textbf{MSC 2020.} 05C25, 05A17, 05C30.

\section{Introduction}

The partition graph $G_n$ is the graph whose vertices are the integer partitions of $n$, with two partitions adjacent when one is obtained from the other by a single elementary transfer of one unit between parts, followed by reordering. Equivalently, one may first append trailing zero parts if necessary, perform one transfer between two parts, and then delete any zero parts and reorder. In particular, one elementary transfer may create a new part of size $1$ or delete a part by reducing it to $0$. This transfer-based viewpoint is related to minimal-change constructions for integer partitions and to the Gray-code literature on partition families \cite{Savage1989,RasmussenSavageWest1995,Mutze2023}; for a different partition-based graph model, see \cite{Bal2022}. In the preceding papers of this series, the family $(G_n)$ has been studied from several complementary viewpoints, including local vertex morphology, boundary and rear structure, axial organization, simplex stratification, directional anisotropy, the interpretation of $G_n$ as a growing discrete geometric object, and the topological background supplied by the associated clique complexes \cite{Lyudogovskiy2026LocalMorphology,Lyudogovskiy2026GrowingObject,Lyudogovskiy2026AxialMorphology,Lyudogovskiy2026SimplexStratification,Lyudogovskiy2026DegreeLandscape,Lyudogovskiy2026BoundaryRear,Lyudogovskiy2026DirectionalGeometry,Lyudogovskiy2026CliqueComplex}. The present paper shifts the emphasis from the internal structure of a fixed graph $G_n$ to the dynamics of the family
\[
G_1,G_2,G_3,\dots
\]
as a coherent morphogenetic sequence. For standard background on integer partitions and Ferrers diagrams, see \cite{Andrews1998}.

Our main goal is to introduce a strict language for comparing different levels of this family and for distinguishing rigorously between exact structural transport and merely visual resemblance. At first sight, the graphs $G_n$ often appear to grow in a recognizable way: certain local motifs recur, parts of the boundary framework look stable, and new layers of complexity seem to emerge by threshold events rather than by arbitrary rearrangement. However, such impressions are not, by themselves, mathematically meaningful. The first task of the present paper is therefore to replace the informal notion of an ``overlay'' between $G_n$ and $G_{n+k}$ by an explicit family of induced graph embeddings.

The basic mechanism is given by Ferrers translations. For every fixed partition $\tau\vdash k$, we consider the map
\[
T_\tau:G_n\to G_{n+k},
\qquad
(T_\tau(\lambda))'=\lambda'+\tau',
\]
defined in terms of conjugate partitions. We prove that these maps are induced graph embeddings. This yields a strict and reusable notion of translation overlay, and with it a rigorous persistence principle for finite induced motifs: once a finite rooted configuration occurs at some level, all its translated copies persist at all higher levels. In particular, the overlay formalism provides a clean threshold language for finitely witnessed properties and a direct monotonicity mechanism for extremal local invariants.

A second aim of the paper is to clarify which first-appearance phenomena belong to theorem-bearing threshold theory and which remain computational. Some thresholds are strict: they are attached to overlay-monotone properties, or to lower bounds on extremal quantities such as the maximum degree, the maximum local clique number, and the maximum local simplex dimension. Other first appearances are only empirical at present: richer axial bridge types, longer spine fragments, more elaborate rear motifs, exact spectrum phenomena, or broader center-versus-boundary regime changes. To keep this distinction explicit, we separate \emph{strict thresholds} from \emph{atlas thresholds}, the latter meaning first observed occurrences in the computed range without a general persistence theorem.

The theorem-bearing part of the paper is intentionally modest and local. Rather than attempting to transport the entire framework, spine, or central region by a single global rule, we work with a small canonical family of motifs. On the strict side, this includes two canonical boundary-corner motifs, denoted $BL1$ and $BR1$, together with three theorem-safe weak rooted templates $P2^\circ$, $E1^\circ$, and $R_{sq}^\circ$, extracted from the first nontrivial axial, rectangular, and square-based occurrences in the atlas. The role of these objects is not to exhaust the morphology of the partition graph family, but to provide representative threshold carriers from several distinct structural zones: boundary, axial, rear-root, and square-based rear contour.

The paper also records a compact atlas framework for first appearances and repeated patterns across $n$, together with a canonical registry of the main motifs used here. Its role is comparative as well as illustrative. Once a strict overlay language is in place, the atlas records where new local motifs first appear, where repeated fragments suggest weak self-similarity, and where complexity seems to migrate between boundary, axis, rear, and interior. It also helps control overstatement: several visually natural patterns turn out to require weaker correspondence language rather than exact transport theorems.

From this perspective, the paper is not a general theory of self-similarity for the family $(G_n)$. The term \emph{weak self-similarity} is used here in a deliberately conservative sense: recurrent finite motifs, repeated local fragments, and persistent normalized structural profiles, rather than exact global replication. Likewise, we do not claim a canonical global transition map from $G_n$ to $G_{n+1}$ that simultaneously respects all distinguished subsystems. What we do claim is that there exists a strict overlay mechanism rich enough to support a threshold theory for finite motifs and local extremal complexity, together with an atlas framework broad enough to organize the observed morphogenesis across levels without overstating what has been computed explicitly here.

The main contributions of the paper may be summarized as follows.

\begin{enumerate}[leftmargin=2em]
    \item We introduce Ferrers-translation overlays $T_\tau:G_n\to G_{n+k}$ and prove that they are induced graph embeddings.
    \item We derive a persistence principle for finite rooted induced motifs and a strict threshold formalism for overlay-monotone finitely witnessed properties.
    \item We prove monotonicity of the extremal local quantities $\Delta_n$, $\Omega_n$, and $S_n$, yielding stable thresholds for lower bounds on degree and local simplex complexity.
    \item We apply this formalism to a canonical motif package consisting of $BL1$, $BR1$, $P2^\circ$, $E1^\circ$, and $R_{sq}^\circ$, thereby obtaining representative threshold statements from several morphologically distinct regions of the graph family.
    \item We distinguish theorem-level thresholds from atlas thresholds and record a compact atlas framework for richer motifs and repeated patterns that are not yet covered by the strict transport theory.
\end{enumerate}

The paper is organized as follows. Section~\ref{sec:growth-language} introduces the growth language, the translation-overlay maps, and their basic consequences. Section~\ref{sec:persistence} develops the persistence formalism and the transport of finite structure. Section~\ref{sec:thresholds} establishes the threshold framework, proves stable-threshold results for extremal local complexity, and applies the theory to the canonical motifs. Section~\ref{sec:comparative-growth} turns to comparative growth, repeated patterns, and weak self-similarity. Section~\ref{sec:atlas} records the current atlas framework, canonical registry, and comparative motif chronology. The final section collects open problems concerning stronger transport mechanisms, broader classes of persistent motifs, and the possible relation between local complexity growth and global geometric simplification.

\section{Growth language and translation overlays}
\label{sec:growth-language}

This section introduces a strict language for comparing different levels of the partition graph family
\[
G_1,G_2,G_3,\dots
\]
and to separate exact structural transport from looser forms of comparison. The basic mechanism is given by Ferrers translations in conjugate coordinates. These maps will provide a canonical class of induced embeddings
\[
G_n \hookrightarrow G_{n+k},
\]
which we use as the formal core of the overlay language in the present paper.

\subsection{Ferrers translations and growth maps}
\label{subsec:ferrers-translations}

Let $\lambda\vdash n$ be a partition, and write
\[
\lambda'=(\lambda'_1,\lambda'_2,\dots)
\]
for its conjugate partition. As usual, we allow trailing zeros when convenient.

For a fixed partition $\tau\vdash k$, define the map
\[
T_\tau:G_n\to G_{n+k}
\]
by
\[
(T_\tau(\lambda))'=\lambda'+\tau'.
\]
Since $\lambda'$ and $\tau'$ are weakly decreasing sequences, their coordinatewise sum $\lambda'+\tau'$ is again weakly decreasing, and its total size is $n+k$. Hence it is the conjugate of a unique partition of $n+k$, so $T_\tau$ is well defined.
Equivalently, in Ferrers-diagram language, $T_\tau(\lambda)$ is obtained by adding the Ferrers diagram of $\tau$ to that of $\lambda$ in conjugate coordinates.

\begin{definition}
\label{def:ferrers-translation}
For $\tau\vdash k$, the map $T_\tau:G_n\to G_{n+k}$ defined by
\[
(T_\tau(\lambda))'=\lambda'+\tau'
\]
is called the \emph{Ferrers translation of type $\tau$}.
\end{definition}

Two special families will be used repeatedly.

\begin{definition}
\label{def:row-column-growth}
For $k\ge 1$, define
\[
R_k:=T_{(k)},\qquad C_k:=T_{(1^k)}.
\]
We call $R_k$ the \emph{row-growth overlay} of length $k$, and $C_k$ the \emph{column-growth overlay} of length $k$.
\end{definition}

These two families reflect the directional anisotropy of the partition graph family: some distinguished structures are naturally transported by row growth, others by column growth, and no single one-step map should be expected to govern all structural zones simultaneously; compare \cite{Lyudogovskiy2026DirectionalGeometry,Lyudogovskiy2026BoundaryRear}.

\subsection{Strict overlays, partial overlays, and transition correspondences}
\label{subsec:overlay-language}

We now distinguish three levels of comparison across different values of $n$.

\begin{definition}
\label{def:strict-overlay}
A \emph{strict overlay} of $G_n$ in $G_{n+k}$ is an induced subgraph of $G_{n+k}$ of the form
\[
T_\tau(G_n)
\]
for some partition $\tau\vdash k$.
\end{definition}

Thus a strict overlay is not merely a visual superposition: it is an induced embedded copy of the whole graph $G_n$.

\begin{definition}
\label{def:partial-overlay}
Let $H\subseteq G_n$ be a subgraph. A \emph{partial overlay} of $H$ in $G_{n+k}$ is an induced copy of $H$ of the form
\[
T_\tau(H)\subseteq G_{n+k}
\]
for some partition $\tau\vdash k$.
\end{definition}

This is the appropriate notion when one transports only a selected subsystem, such as a finite motif, a local neighborhood pattern, or a chosen fragment of a distinguished structural zone.

\begin{definition}
\label{def:transition-correspondence}
A \emph{transition correspondence}
\[
\Phi_{n\to n+k}:G_n\rightsquigarrow G_{n+k}
\]
is any explicitly specified rule that associates selected vertices or selected substructures of $G_n$ with corresponding vertices or substructures of $G_{n+k}$, without requiring inducedness or even a globally defined map on all vertices.
\end{definition}

Transition correspondences are useful for broader morphological comparison, especially when discussing the framework, the self-conjugate axis, the spine, the rear region, or other distinguished subsystems for which no single strict overlay is either available or natural.

\begin{remark}
\label{rem:overlay-vs-correspondence}
The terminology above is intentionally strict. In particular, the word \emph{overlay} will always refer to an induced transport mechanism coming from a Ferrers translation, whereas looser comparisons will be described as \emph{transition correspondences}, \emph{transport heuristics}, or \emph{atlas-level similarities}.
\end{remark}

\subsection{Adjacency in conjugate coordinates}
\label{subsec:adjacency-conjugate}

The key point is that adjacency in $G_n$ admits a simple description in conjugate coordinates.

\begin{lemma}
\label{lem:conjugation-automorphism}
For each $n$, the map
\[
\lambda\mapsto \lambda'
\]
is a graph automorphism of $G_n$.
\end{lemma}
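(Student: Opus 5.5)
The plan is to describe adjacency in $G_n$ purely in terms of Ferrers diagrams as sets of cells, in a form that transposition visibly preserves. The conjugation map $\lambda\mapsto\lambda'$ is the transpose of Ferrers diagrams, so it is an involution on the vertex set of $G_n$ and in particular a bijection. It then suffices to show that it preserves adjacency; non-adjacency is automatic because the map is its own inverse.

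\textbf{Step 1: characterize adjacency by diagrams.} I would prove that for $\lambda,\mu\vdash n$ the following are equivalent:
\begin{enumerate}[leftmargin=2em]
\item $\lambda$ and $\mu$ are adjacent in $G_n$;
\item $\lambda\neq\mu$, and there is a partition $\nu\vdash n-1$ whose Ferrers diagram is contained in both diagrams, so that $|D(\lambda)\cap D(\mu)|=n-1$.
\end{enumerate}

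For the forward direction, take a transfer from a part of size $a\ge 1$ to a part of size $b\ge 0$, with zero parts appended if needed. Decreasing one part equal to $a$ by one and then reordering gives the same result as decreasing the \emph{last} part equal to $a$. That operation removes a removable corner cell, and the result is a partition $\nu\vdash n-1$ with $D(\nu)\subset D(\lambda)$.

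Next, increasing a part equal to $b$ and reordering gives the same result as increasing the \emph{first} part equal to $b$ in $\nu$. That operation adds an addable corner of $\nu$, so $D(\nu)\subset D(\mu)$.

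For the converse, start from $\nu$ contained in both diagrams. Then $D(\lambda)\setminus D(\nu)$ is a single removable cell, in some row of length $a$. Likewise $D(\mu)\setminus D(\nu)$ is a single addable cell, in some row of length $b$ of $\nu$. Reading these as one unit taken from a part and given to another part realizes $\mu$ from $\lambda$ by one elementary transfer. The condition $\lambda\neq\mu$ excludes the degenerate transfers that return the same partition; those are loops, not edges of $G_n$.

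\textbf{Step 2: apply transposition.} Transposition is a bijection on cells. It maps the diagram of a partition to the diagram of its conjugate, and it preserves inclusions and the sizes of intersections. Hence if $\nu$ witnesses condition (2) for $\lambda,\mu$, then $\nu'$ witnesses it for $\lambda',\mu'$. Also $\lambda'\ne\mu'$ whenever $\lambda\ne\mu$. So conjugation maps edges to edges, and being an involution it is a graph automorphism of $G_n$.

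\textbf{Main obstacle.} The only delicate point is the bookkeeping in Step 1 caused by reordering after a transfer. One has to check that the choice of which equal part is modified does not matter, and that creating a new part (the case $b=0$) or deleting a part (when $a=1$ drops to $0$) still fits the remove-a-corner, add-a-corner description. I would handle this by always normalizing to the last row of length $a$ and the first row of length $b$, as above. With that choice the diagrams stay partition-shaped at every stage, and no separate case analysis is needed.
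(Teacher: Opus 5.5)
Your proof is correct. It rests on the same picture as the paper's: a unit transfer moves one cell of the Ferrers diagram, and transposition exchanges rows and columns. The difference is in how you organize the argument.

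The paper transposes the move directly. It says that moving a terminal cell between rows, followed by reordering, becomes after transposition a move of a terminal cell between columns, and hence a unit transfer for $\lambda'$ and $\mu'$. You first prove a characterization of adjacency that is visibly transposition-invariant: $\lambda\neq\mu$ and $D(\lambda)\cap D(\mu)$ contains the diagram of some $\nu\vdash n-1$. You then only need that transposition preserves diagram inclusions.

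What your route buys is control of the reordering step, which the paper passes over. If the cell is taken from an arbitrary row of length $a$, it need not be the bottom cell of its column. For $\lambda=(2,2)$, taking the terminal cell of the first row removes the \emph{top} cell of the second column. Likewise, a cell added to a row of length $b$ other than the first such row does not produce a Ferrers diagram before reordering. So ``the same move'' is literally a column transfer only after the normalization you make explicit: remove the corner in the last row of length $a$, giving $\nu$, then add the corner in the first row of length $b$ of $\nu$. Your converse also handles the degenerate case, where the added cell returns to the row it came from, explicitly through $\lambda\neq\mu$.

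The paper's version is shorter. Yours costs one extra characterization lemma, but that lemma is exactly the rigorous content behind the phrase ``followed by reordering.''
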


\begin{proof}
If $\mu$ is obtained from $\lambda$ by an elementary unit transfer, then in Ferrers-diagram language one moves a single terminal cell from one row of $\lambda$ to another, allowing the target row to be initially empty and deleting the source row if it becomes empty after the move, followed by reordering. After transposing Ferrers diagrams, rows and columns are interchanged, so the same move becomes the transfer of a single terminal cell between two columns. This is again an elementary unit transfer between parts, now for the conjugate partitions $\lambda'$ and $\mu'$. Hence adjacency is preserved by conjugation.

Since conjugation is an involution, the same argument applies in reverse. Therefore $\lambda\mapsto\lambda'$ is a graph automorphism of $G_n$.
\end{proof}

\begin{lemma}
\label{lem:adjacency-conjugate-l1}
Let $\lambda,\mu\vdash n$. Then the following are equivalent:
\begin{enumerate}[leftmargin=2em]
    \item $\lambda$ and $\mu$ are adjacent in $G_n$;
    \item $\lambda'$ and $\mu'$ differ by one elementary unit transfer between parts;
    \item after padding by trailing zeros if necessary,
    \[
    \|\lambda'-\mu'\|_1=2.
    \]
\end{enumerate}
\end{lemma}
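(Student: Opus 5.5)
The plan is to prove (1)$\Leftrightarrow$(2) directly from Lemma~\ref{lem:conjugation-automorphism}, and then (2)$\Leftrightarrow$(3) by a purely combinatorial statement about weakly decreasing sequences. The second equivalence is applied to the conjugates; note that $\lambda'$ and $\mu'$ are themselves arbitrary partitions of $n$.

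\emph{Step 1: (1)$\Leftrightarrow$(2).} By definition of $G_n$, adjacency of two partitions $\alpha,\beta\vdash n$ means that $\beta$ arises from $\alpha$ by one elementary unit transfer followed by reordering. By Lemma~\ref{lem:conjugation-automorphism}, $\lambda\sim\mu$ in $G_n$ if and only if $\lambda'\sim\mu'$ in $G_n$. That latter condition is, by definition, statement (2). So this step is immediate.

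\emph{Step 2: reduction.} It remains to show that for partitions $\alpha,\beta\vdash n$, padded with zeros to a common length, $\alpha$ and $\beta$ differ by one elementary transfer if and only if $\|\alpha-\beta\|_1=2$.

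\emph{Step 3: (2)$\Rightarrow$(3).} Suppose $\beta$ is obtained from $\alpha$ by transferring a unit from a part of value $a$ to a part of value $b$, with $a\ge1$ and $b\ge0$, then reordering. The case $a=b+1$ is a trivial move giving $\beta=\alpha$, which is excluded since adjacency requires distinct vertices. The key observation is that reordering can be absorbed by choosing the position of the changed entry. Decreasing a part equal to $a$ and then sorting is the same as decreasing the \emph{last} occurrence of $a$ in $\alpha$. Increasing a part equal to $b$ is the same as increasing the \emph{first} occurrence of $b$. I will check that these two normalized modifications can be performed simultaneously, at distinct positions $i\neq j$, and that the result is still weakly decreasing. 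The cases $a>b+1$ and $a\le b$ need separate attention, including when the two occurrences are adjacent in position. The outcome is that $\beta=\alpha-e_i+e_j$ coordinatewise, so $\|\alpha-\beta\|_1=2$.

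\emph{Step 4: (3)$\Rightarrow$(2).} Since $\sum\alpha_i=\sum\beta_i=n$, an $\ell_1$ distance of $2$ forces the difference $\alpha-\beta$ to be $e_i-e_j$ for some $i\neq j$. Indeed, the positive and negative parts of the difference each have total mass $1$, and the entries are integers. Then $\beta$ is obtained from $\alpha$ by moving one unit from part $i$ to part $j$, where part $j$ may be a padded zero. This is an elementary transfer, with no reordering needed since $\beta$ is already sorted. If $\alpha_i=1$, the source part becomes zero and is deleted, which is allowed.

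The main obstacle is the bookkeeping in Step 3. Showing that the sorted result differs from $\alpha$ in exactly two coordinates, each by one unit, requires the last-occurrence/first-occurrence normalization and care with ties, for example when $b=a-1$ or $b=a$. Everything else is formal.
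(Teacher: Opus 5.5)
Your proposal is correct and follows the same route as the paper: (1)$\Leftrightarrow$(2) comes directly from Lemma~\ref{lem:conjugation-automorphism}, and (2)$\Leftrightarrow$(3) comes from the fact that two equal-sum integer vectors at $\ell_1$-distance $2$ differ by exactly one $+1$ and one $-1$, and conversely. The only difference is that your last-occurrence/first-occurrence normalization in Step~3 explicitly handles the reordering (and excludes the trivial move $a=b+1$), whereas the paper's proof simply asserts that the difference vector has one $+1$ and one $-1$ without addressing either point.
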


\begin{proof}
The equivalence of (1) and (2) is exactly \Cref{lem:conjugation-automorphism}.

If $\mu'$ is obtained from $\lambda'$ by decreasing one part by $1$ and increasing another part by $1$, then the difference vector $\mu'-\lambda'$ has exactly one coordinate equal to $+1$, exactly one coordinate equal to $-1$, and all other coordinates equal to $0$. Hence
\[
\|\lambda'-\mu'\|_1=2.
\]

Conversely, if $\|\lambda'-\mu'\|_1=2$, then since $\lambda'$ and $\mu'$ have the same total sum $n$, the difference vector $\mu'-\lambda'$ must again consist of exactly one $+1$, exactly one $-1$, and zeros elsewhere. Thus $\mu'$ is obtained from $\lambda'$ by one elementary unit transfer between parts. This gives (2), and hence (1).
\end{proof}

\begin{lemma}
\label{lem:translation-preserves-l1}
For every fixed partition $\tau$ and every $\lambda,\mu\vdash n$,
\[
\|(T_\tau(\lambda))'-(T_\tau(\mu))'\|_1
=
\|\lambda'-\mu'\|_1.
\]
\end{lemma}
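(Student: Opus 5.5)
This is essentially a cancellation identity in a fixed coordinate system, so I will prove it by computing coordinatewise. First I fix a common length: choose $L$ at least the number of nonzero entries of $\lambda'$, $\mu'$ and $\tau'$. Then I regard $\lambda'$, $\mu'$, $\tau'$ as vectors in $\mathbb{Z}_{\ge 0}^L$ by padding with trailing zeros, as allowed in \Cref{subsec:ferrers-translations} and in \Cref{lem:adjacency-conjugate-l1}. By \Cref{def:ferrers-translation}, the conjugates $(T_\tau(\lambda))'$ and $(T_\tau(\mu))'$ are the coordinatewise sums $\lambda'+\tau'$ and $\mu'+\tau'$. Their number of nonzero entries is at most $L$, so the same padding length serves for all vectors in the statement.

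Next I compute coordinatewise. For each $1\le i\le L$,
\[
(\lambda'_i+\tau'_i)-(\mu'_i+\tau'_i)=\lambda'_i-\mu'_i ,
\]
so the difference vectors agree entry by entry. Summing absolute values over $i$ then gives
\[
\|(T_\tau(\lambda))'-(T_\tau(\mu))'\|_1=\|\lambda'-\mu'\|_1 .
\]

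The only point needing care, and it is minor, is that the $\ell_1$ norm does not depend on the chosen padding length. Appending further zero coordinates to both vectors adds only terms $|0-0|=0$. Hence the norm is well defined, and using one common $L$ for both sides is legitimate. I would state this explicitly, because the conjugate of $T_\tau(\lambda)$ may be longer than $\lambda'$ (it has length $\max(\ell(\lambda'),\ell(\tau'))$).

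As a forward remark, combining this lemma with \Cref{lem:adjacency-conjugate-l1} shows that $T_\tau$ preserves both adjacency and non-adjacency. Injectivity also follows, since $\lambda'+\tau'=\mu'+\tau'$ forces $\lambda'=\mu'$. Together these give the induced-embedding statement.
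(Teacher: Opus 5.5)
Your proposal is correct and follows essentially the paper's argument: the paper also writes $(T_\tau(\lambda))'-(T_\tau(\mu))'=(\lambda'+\tau')-(\mu'+\tau')=\lambda'-\mu'$ and concludes at once. Your explicit check that padding with trailing zeros does not change the $\ell_1$ norm is a careful detail that the paper leaves implicit.
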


\begin{proof}
By definition,
\[
(T_\tau(\lambda))'=\lambda'+\tau',
\qquad
(T_\tau(\mu))'=\mu'+\tau'.
\]
Therefore
\[
(T_\tau(\lambda))'-(T_\tau(\mu))'=\lambda'-\mu',
\]
and the claim follows immediately.
\end{proof}

\subsection{The Translation Overlay Theorem}
\label{subsec:translation-overlay-theorem}

We now obtain the main structural result of the paper.

\begin{theorem}[Translation Overlay Theorem]
\label{thm:translation-overlay}
Let $\tau\vdash k$. Then
\[
T_\tau:G_n\longrightarrow G_{n+k}
\]
is an injective induced graph embedding.

Equivalently, $T_\tau(G_n)$ is a strict overlay of $G_n$ inside $G_{n+k}$.
\end{theorem}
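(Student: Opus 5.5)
The plan is to reduce everything to conjugate coordinates, where $T_\tau$ acts as a translation by the fixed vector $\tau'$. Three things must be checked. First, $T_\tau$ is well defined, which the paper has already noted. Second, $T_\tau$ is injective. Third, for all $\lambda,\mu\vdash n$, $\lambda\sim\mu$ in $G_n$ if and only if $T_\tau(\lambda)\sim T_\tau(\mu)$ in $G_{n+k}$. Injectivity together with this two-sided adjacency equivalence is exactly the statement that $T_\tau$ is an isomorphism onto the induced subgraph $T_\tau(G_n)$. That in turn is the definition of a strict overlay in \Cref{def:strict-overlay}.

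For injectivity, suppose $T_\tau(\lambda)=T_\tau(\mu)$. Then their conjugates agree, so $\lambda'+\tau'=\mu'+\tau'$ as padded sequences, and subtracting $\tau'$ coordinatewise gives $\lambda'=\mu'$. Since conjugation is an involution on partitions, this yields $\lambda=\mu$.

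For adjacency, I will chain the earlier lemmas. By \Cref{lem:adjacency-conjugate-l1} applied in $G_n$, $\lambda\sim\mu$ if and only if $\|\lambda'-\mu'\|_1=2$. By \Cref{lem:translation-preserves-l1}, this equals $\|(T_\tau(\lambda))'-(T_\tau(\mu))'\|_1$. By \Cref{lem:adjacency-conjugate-l1} applied in $G_{n+k}$ to the partitions $T_\tau(\lambda),T_\tau(\mu)\vdash n+k$, the latter equals $2$ if and only if $T_\tau(\lambda)\sim T_\tau(\mu)$. Each step is an equivalence, so edges and non-edges are both preserved, which is the inducedness.

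The only point needing care is the padding convention. I must make sure the $\ell_1$ norms are computed on sequences padded to a common length large enough for $\lambda',\mu',\tau'$ simultaneously. Trailing zeros contribute nothing to the norm, so the norm is independent of the padding length. Also, $\lambda'+\tau'$ is a genuine partition of $n+k$, so \Cref{lem:adjacency-conjugate-l1} legitimately applies at level $n+k$. I expect this bookkeeping to be the sole (mild) obstacle; there is no real combinatorial difficulty once \Cref{lem:conjugation-automorphism,lem:adjacency-conjugate-l1,lem:translation-preserves-l1} are in hand.
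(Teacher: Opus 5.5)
Your proposal is correct and follows the paper's proof step for step. Injectivity comes from cancelling $\tau'$ in conjugate coordinates and using that conjugation is an involution. Inducedness comes from chaining \Cref{lem:adjacency-conjugate-l1} at level $n$, the translation invariance of \Cref{lem:translation-preserves-l1}, and \Cref{lem:adjacency-conjugate-l1} again at level $n+k$. Your extra remark about padding to a common length is sensible bookkeeping that the paper leaves implicit.
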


\begin{proof}
Injectivity is immediate: if $T_\tau(\lambda)=T_\tau(\mu)$, then
\[
\lambda'+\tau'=\mu'+\tau',
\]
hence $\lambda'=\mu'$, and therefore $\lambda=\mu$.

It remains to show that adjacency is both preserved and reflected. By Lemma~\ref{lem:adjacency-conjugate-l1},
\[
\lambda\sim\mu
\quad\Longleftrightarrow\quad
\|\lambda'-\mu'\|_1=2.
\]
By Lemma~\ref{lem:translation-preserves-l1},
\[
\|\lambda'-\mu'\|_1
=
\|(T_\tau(\lambda))'-(T_\tau(\mu))'\|_1.
\]
Applying Lemma~\ref{lem:adjacency-conjugate-l1} again, now at level $n+k$, we obtain
\[
\lambda\sim\mu
\quad\Longleftrightarrow\quad
T_\tau(\lambda)\sim T_\tau(\mu).
\]
Thus $T_\tau$ preserves both adjacency and non-adjacency, so it is an induced graph embedding.
\end{proof}

\begin{corollary}
\label{cor:finite-pattern-persistence}
Let $H$ be a finite induced subgraph of $G_n$. Then for every partition $\tau$, the image $T_\tau(H)$ is an induced subgraph of $G_{n+|\tau|}$ isomorphic to $H$.

In particular, every finite rooted induced motif occurring at some level persists at infinitely many higher levels.
\end{corollary}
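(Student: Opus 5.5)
This follows directly from \Cref{thm:translation-overlay}, restricting the embedding $T_\tau$ to the vertex set of $H$. Let $H$ be the induced subgraph of $G_n$ on a finite vertex set $V(H)\subseteq V(G_n)$, and let $\tau\vdash k$ with $k=|\tau|$. By the theorem, $T_\tau:G_n\to G_{n+k}$ is injective and satisfies
\[
\lambda\sim\mu \iff T_\tau(\lambda)\sim T_\tau(\mu)
\qquad (\lambda,\mu\vdash n).
\]
Restricted to $V(H)$, $T_\tau$ is therefore a bijection onto $T_\tau(V(H))$. For any two vertices of $H$, adjacency in $H$ equals adjacency in $G_n$, because $H$ is induced. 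That in turn equals adjacency of their images in $G_{n+k}$. Hence $T_\tau|_{V(H)}$ is a graph isomorphism from $H$ onto the induced subgraph of $G_{n+k}$ on $T_\tau(V(H))$, which is what we call $T_\tau(H)$.

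For the rooted statement, a rooted induced motif is a pair $(H,\rho)$ with root $\rho\in V(H)$. We carry the root along as $T_\tau(\rho)$. The isomorphism above then sends $\rho$ to $T_\tau(\rho)$, so $(T_\tau(H),T_\tau(\rho))$ is an isomorphic rooted induced copy. This argument also covers any finite decoration of the motif defined purely in terms of its vertices and adjacencies.

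Persistence at infinitely many higher levels requires, for each $k\ge1$, at least one partition of $k$. Taking for instance $\tau=(k)$, i.e.\ the row-growth overlay $R_k$ (or $C_k$), produces a copy of the motif in $G_{n+k}$ for every $k\ge1$. So the motif in fact occurs at every level $m\ge n$, which is stronger than infinitely many.

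There is no real obstacle here. The only point needing care is that ``induced'' is used at both ends: $H$ must be induced in $G_n$, so that its edges are exactly the $G_n$-edges, and the image must be induced in $G_{n+k}$. The second fact relies on the ``reflects adjacency'' half of \Cref{thm:translation-overlay}, not merely on edge preservation.
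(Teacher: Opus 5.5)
Your proof is correct and takes the same approach as the paper, which likewise just restricts the induced embedding $T_\tau$ of \Cref{thm:translation-overlay} to $H$. Your explicit handling of the root and of persistence via $\tau=(k)$ for every $k\ge 1$ spells out details the paper leaves implicit.
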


\begin{proof}
Restrict the induced embedding $T_\tau:G_n\hookrightarrow G_{n+|\tau|}$ to the chosen induced copy of $H$.
\end{proof}

\begin{remark}
\label{rem:exact-transport-limitations}
Theorem~\ref{thm:translation-overlay} provides an exact transport mechanism for finite induced structure. It does not imply that every morphologically distinguished subsystem of $G_n$ is transported canonically in the strong geometric sense. In later sections, this is why we distinguish carefully between theorem-safe rooted templates and stronger typed interpretations used only in the atlas and the morphological discussion.
\end{remark}

\subsection{Local consequences and monotonicity of extremal complexity}
\label{subsec:local-monotonicity}

The overlay mechanism immediately implies that old local configurations cannot disappear under growth, even though new neighbors or new cliques may arise outside the overlay image.

Let $\deg(\lambda)$ denote the degree of $\lambda$ in $G_n$. Let $\omega_{\mathrm{loc}}(\lambda)$ be the maximum size of a clique in $G_n$ containing $\lambda$, and let
\[
s_{\mathrm{loc}}(\lambda)=\omega_{\mathrm{loc}}(\lambda)-1
\]
be the local simplex dimension at $\lambda$.

Define the extremal quantities
\[
\Delta_n=\max_{\lambda\vdash n}\deg(\lambda),\qquad
\Omega_n=\max_{\lambda\vdash n}\omega_{\mathrm{loc}}(\lambda),\qquad
S_n=\max_{\lambda\vdash n}s_{\mathrm{loc}}(\lambda).
\]

\begin{lemma}
\label{lem:local-quantities-nondecrease}
For every partition $\tau$ and every vertex $\lambda\vdash n$,
\[
\deg(T_\tau(\lambda))\ge \deg(\lambda),
\]
\[
\omega_{\mathrm{loc}}(T_\tau(\lambda))\ge \omega_{\mathrm{loc}}(\lambda),
\]
and therefore
\[
s_{\mathrm{loc}}(T_\tau(\lambda))\ge s_{\mathrm{loc}}(\lambda).
\]
\end{lemma}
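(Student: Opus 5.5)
The plan is to deduce all three inequalities directly from \Cref{thm:translation-overlay}, using only that $T_\tau:G_n\hookrightarrow G_{n+|\tau|}$ is injective and preserves adjacency. No new combinatorics of partitions should be needed. The fact that the embedding is induced, so that it also reflects non-adjacency, will not be used here. Only the forward direction (adjacent vertices go to adjacent vertices) and injectivity matter.

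For the degree, I would take the neighborhood $N(\lambda)\subseteq G_n$ and look at its image $T_\tau(N(\lambda))$. By \Cref{thm:translation-overlay}, every $\mu\in N(\lambda)$ satisfies $T_\tau(\mu)\sim T_\tau(\lambda)$ in $G_{n+k}$, so $T_\tau(N(\lambda))\subseteq N(T_\tau(\lambda))$. By injectivity, $|T_\tau(N(\lambda))|=|N(\lambda)|$. Together these give $\deg(T_\tau(\lambda))\ge\deg(\lambda)$. The neighborhood of $T_\tau(\lambda)$ may contain further vertices outside the overlay image, and this is exactly why the statement is an inequality rather than an equality.

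For the local clique number, I would choose a clique $K\subseteq G_n$ with $\lambda\in K$ and $|K|=\omega_{\mathrm{loc}}(\lambda)$. Adjacency is preserved, so any two distinct elements of $T_\tau(K)$ are adjacent, and $T_\tau(K)$ is a clique in $G_{n+k}$. It contains $T_\tau(\lambda)$, and by injectivity it has the same size as $K$. Hence $\omega_{\mathrm{loc}}(T_\tau(\lambda))\ge|K|=\omega_{\mathrm{loc}}(\lambda)$. The inequality for $s_{\mathrm{loc}}$ then follows by subtracting $1$ from both sides.

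I do not expect a genuine obstacle. The only point needing care is to state explicitly that injectivity is used: it guarantees that distinct neighbors, or distinct clique members, are not collapsed onto each other, so cardinalities are preserved. One should also note that the degree of $T_\tau(\lambda)$ is computed in $G_{n+k}$, while the degree of $\lambda$ is computed in $G_n$.
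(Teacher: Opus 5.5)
Your proposal is correct and follows the paper's proof essentially verbatim: both derive the two inequalities from Theorem~\ref{thm:translation-overlay} by sending the neighbors of $\lambda$ to distinct neighbors of $T_\tau(\lambda)$ and a clique through $\lambda$ to a clique of the same size through $T_\tau(\lambda)$, then subtract $1$ to get the statement for $s_{\mathrm{loc}}$. Your remark that only injectivity and forward preservation of adjacency are needed is accurate; the paper cites the induced embedding but never uses the reflection of non-adjacency either.
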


\begin{proof}
By Theorem~\ref{thm:translation-overlay}, the map $T_\tau$ embeds $G_n$ as an induced subgraph of $G_{n+|\tau|}$. Hence every neighbor of $\lambda$ is carried to a distinct neighbor of $T_\tau(\lambda)$, which gives
\[
\deg(T_\tau(\lambda))\ge \deg(\lambda).
\]

Likewise, if $K$ is a clique containing $\lambda$, then $T_\tau(K)$ is a clique containing $T_\tau(\lambda)$. Hence
\[
\omega_{\mathrm{loc}}(T_\tau(\lambda))\ge \omega_{\mathrm{loc}}(\lambda).
\]
The statement for $s_{\mathrm{loc}}$ follows immediately from the relation
\[
s_{\mathrm{loc}}=\omega_{\mathrm{loc}}-1.
\]
\end{proof}

\begin{proposition}
\label{prop:extremal-local-monotonicity}
The sequences
\[
(\Delta_n)_{n\ge 1},\qquad
(\Omega_n)_{n\ge 1},\qquad
(S_n)_{n\ge 1}
\]
are nondecreasing in $n$. Equivalently, for all $n\ge 1$ and all $k\ge 1$,
\[
\Delta_{n+k}\ge \Delta_n,\qquad
\Omega_{n+k}\ge \Omega_n,\qquad
S_{n+k}\ge S_n.
\]
\end{proposition}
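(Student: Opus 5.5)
The plan is to derive the proposition directly from \Cref{lem:local-quantities-nondecrease}, using a single Ferrers translation of the right size. Fix $n\ge 1$ and $k\ge 1$, and choose any partition $\tau\vdash k$. For definiteness I would take the row-growth overlay $R_k=T_{(k)}$ from \Cref{def:row-column-growth}. By \Cref{thm:translation-overlay}, $T_\tau$ maps $G_n$ into $G_{n+k}$.

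For the degree, let $\lambda\vdash n$ be a vertex attaining $\Delta_n$. Then \Cref{lem:local-quantities-nondecrease} gives
\[
\Delta_{n+k}\ge \deg(T_\tau(\lambda))\ge \deg(\lambda)=\Delta_n,
\]
where the first inequality holds because $T_\tau(\lambda)$ is a vertex of $G_{n+k}$. The same argument works for $\Omega$: take $\lambda$ attaining $\Omega_n$ and use the clique inequality of the lemma. For $S_n$ I would either repeat the argument once more or simply note that $S_n=\Omega_n-1$, since the maximum commutes with subtracting a constant.

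Finally, the two formulations in the statement are equivalent. The case $k=1$ says each sequence is nondecreasing, and the general $k$ follows either from a chain of $k$ one-step inequalities or directly from the argument above, which already applies to arbitrary $k$.

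There is no real obstacle here. The only points to check are that the maxima exist, which holds because $G_n$ is finite and nonempty for $n\ge1$, and that a partition of every size $k\ge1$ exists, so that $\tau$ can be chosen.
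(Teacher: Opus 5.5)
Your proof is correct and follows the paper's own argument essentially verbatim. Like the paper, you fix $\tau\vdash k$, take a maximizer $\lambda$ at level $n$, and apply Lemma~\ref{lem:local-quantities-nondecrease} to the vertex $T_\tau(\lambda)$ of $G_{n+k}$. Your side remarks (existence of the maxima, $S_n=\Omega_n-1$, and the equivalence of the two formulations) are accurate details that the paper leaves implicit.
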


\begin{proof}
Fix $n$ and $k$, and choose any partition $\tau\vdash k$.

Let $\lambda\vdash n$ realize $\Delta_n$. By Lemma~\ref{lem:local-quantities-nondecrease},
\[
\deg(T_\tau(\lambda))\ge \deg(\lambda)=\Delta_n.
\]
Since $T_\tau(\lambda)$ is a vertex of $G_{n+k}$, it follows that
\[
\Delta_{n+k}\ge \Delta_n.
\]

The same argument applies to $\Omega_n$ and $S_n$: if $\lambda$ realizes either extremum at level $n$, then its translated image realizes at least the same value at level $n+k$. Hence
\[
\Omega_{n+k}\ge \Omega_n,\qquad S_{n+k}\ge S_n.
\]
\end{proof}

\begin{remark}
\label{rem:local-extrema-vs-full-spectra}
Proposition~\ref{prop:extremal-local-monotonicity} concerns only extremal local quantities. It does not imply monotonicity of the full degree spectrum, of the set of realized local simplex dimensions, or of any exact-value distributional profile. Those finer phenomena will be treated later as atlas-level first-appearance data rather than as automatic consequences of the overlay mechanism.
\end{remark}

\subsection{Directional overlays and anisotropic transport}
\label{subsec:directional-overlays}

Although Theorem~\ref{thm:translation-overlay} is completely general, certain special overlay families are better adapted to specific morphological zones.

The row-growth family $R_k=T_{(k)}$ naturally adds horizontal mass in Ferrers coordinates, while the column-growth family $C_k=T_{(1^k)}$ adds vertical mass. Accordingly, these two families interact differently with the distinguished front and boundary structures already identified in the earlier papers of the series \cite{Lyudogovskiy2026BoundaryRear,Lyudogovskiy2026DirectionalGeometry}.

At the most elementary level, the canonical left boundary-corner motif introduced in Section~\ref{sec:thresholds} is compatible with the row-growth family, whereas its right-hand conjugate counterpart is compatible with the column-growth family. This already shows that the boundary framework should not be expected to admit a single preferred one-step transport rule. Instead, the framework is better viewed as directionally decomposed, with different canonical overlay families acting naturally on different sides.

More generally, the row/column dichotomy provides a strict background for the anisotropy language developed elsewhere in the project \cite{Lyudogovskiy2026DirectionalGeometry}. Some transport phenomena are exact and arise from a chosen translation family; others are only correspondence-level and must be described more cautiously. In this sense, directional overlays form the strict core of the comparative growth language, while broader morphogenetic claims remain partly computational and partly heuristic.

\begin{remark}
\label{rem:section2-output}
Only two outputs of this section are used later: the strict overlay mechanism $T_\tau$ and the induced monotonicity of extremal local complexity.
\end{remark}

\section{Persistence and transport of finite structure}
\label{sec:persistence}

Beyond giving exact induced embeddings, the overlay mechanism yields a persistence theory for finite structure across the growing partition graph family. This section isolates that finite-witness mechanism, clarifies the hierarchy of transport notions used throughout the paper, and prepares the threshold applications of Section~\ref{sec:thresholds}.

\subsection{Overlay-monotone properties and finite witnesses}
\label{subsec:overlay-monotone-properties}

The most robust consequences of the translation-overlay theorem concern properties that admit finite witnesses and are preserved under strict overlays.

\begin{definition}
\label{def:overlay-monotone}
A property $\mathcal P$ of the level graphs $G_n$ is called \emph{overlay-monotone} if for every $n\ge 1$, every $k\ge 1$, and every partition $\tau\vdash k$,
\[
G_n \text{ has } \mathcal P
\quad\Longrightarrow\quad
G_{n+k} \text{ has } \mathcal P.
\]
Equivalently, once $\mathcal P$ is realized at some level, it survives under every strict translation overlay.
\end{definition}

\begin{definition}
\label{def:motif-defined-property}
A property $\mathcal P$ is called \emph{motif-defined} if there exists a class $\mathcal M$ of finite rooted induced graphs such that
\[
G_n \text{ has } \mathcal P
\quad\Longleftrightarrow\quad
G_n \text{ contains an induced rooted copy of some } M\in\mathcal M.
\]
Here an \emph{induced rooted copy} of $M$ means an induced subgraph of $G_n$ together with an isomorphism preserving the designated root data of $M$ (a distinguished vertex, or an ordered distinguished tuple, according to the template).
\end{definition}

The basic example is the property
\[
\mathcal P_M:\qquad
G_n \text{ contains an induced rooted copy of a fixed finite motif } M.
\]

\begin{proposition}
\label{prop:motif-defined-overlay-monotone}
Every motif-defined property is overlay-monotone.
\end{proposition}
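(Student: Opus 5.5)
The argument is a direct transport of a witness along a single Ferrers translation. Let $\mathcal P$ be motif-defined with witness class $\mathcal M$. Fix $n\ge 1$ and $k\ge 1$, and suppose $G_n$ has $\mathcal P$. By Definition~\ref{def:motif-defined-property}, this gives some $M\in\mathcal M$, an induced subgraph $H\subseteq G_n$, and an isomorphism $\varphi:M\to H$ carrying the root data of $M$ to a designated vertex or ordered tuple in $H$. Although Definition~\ref{def:overlay-monotone} quantifies over all $\tau\vdash k$, its conclusion only concerns $G_{n+k}$, so a single translation suffices. The concrete choice I would use is $\tau=(k)$, that is, $R_k$.

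By Theorem~\ref{thm:translation-overlay}, the map $T_\tau:G_n\to G_{n+k}$ is an injective induced graph embedding. Corollary~\ref{cor:finite-pattern-persistence} then says that $T_\tau(H)$ is an induced subgraph of $G_{n+k}$, and that the restriction $T_\tau|_H:H\to T_\tau(H)$ is a graph isomorphism.

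The composite $T_\tau\circ\varphi:M\to T_\tau(H)$ is therefore an isomorphism onto an induced subgraph of $G_{n+k}$. For the root data, the roots of the copy are declared to be the images under $T_\tau$ of the roots of $H$. Since $T_\tau$ is injective and is applied coordinatewise to an ordered tuple, distinctness and order are both preserved. Hence $T_\tau(H)$ is an induced rooted copy of the same $M\in\mathcal M$, so $G_{n+k}$ has $\mathcal P$. Because $n$ and $k$ were arbitrary, $\mathcal P$ is overlay-monotone.

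The one point needing care is that inducedness of the copy relies on the embedding being induced, not merely a homomorphism. Non-adjacencies inside $H$ must remain non-adjacencies in $G_{n+k}$, and this is exactly the "reflects adjacency" half of Theorem~\ref{thm:translation-overlay}. The root-preservation check is purely formal once injectivity is known.
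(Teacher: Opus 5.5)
Your proposal is correct and follows essentially the same route as the paper: you push the rooted induced witness forward along $T_\tau$ using Theorem~\ref{thm:translation-overlay} and Corollary~\ref{cor:finite-pattern-persistence}, and the images of the root data give the root-preserving isomorphism at level $n+k$. Fixing the single translation $\tau=(k)$ is a harmless simplification, since the conclusion of Definition~\ref{def:overlay-monotone} does not depend on $\tau$, whereas the paper simply argues for an arbitrary $\tau$.
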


\begin{proof}
Suppose that $G_n$ has a motif-defined property $\mathcal P$. Then $G_n$ contains an induced rooted copy of some $M\in\mathcal M$, that is, an induced subgraph equipped with a root-preserving isomorphism to $M$. By Corollary~\ref{cor:finite-pattern-persistence}, every strict overlay $T_\tau$ carries this copy to an induced copy of the same finite graph inside $G_{n+|\tau|}$, and the image of the distinguished root data gives a root-preserving isomorphism to $M$ again. Hence $G_{n+|\tau|}$ also has $\mathcal P$. Thus $\mathcal P$ is overlay-monotone.
\end{proof}

\begin{corollary}
\label{cor:fixed-motif-overlay-monotone}
For every fixed finite rooted induced graph $M$, the property
\[
\mathcal P_M:\qquad
G_n \text{ contains an induced rooted copy of } M
\]
is overlay-monotone.
\end{corollary}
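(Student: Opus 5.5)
The plan is to obtain the corollary as the special case of Proposition~\ref{prop:motif-defined-overlay-monotone} in which the witness class consists of a single motif. First I will check that $\mathcal P_M$ is motif-defined. Take $\mathcal M=\{M\}$. Then, by the definition of $\mathcal P_M$,
\[
G_n \text{ has } \mathcal P_M
\quad\Longleftrightarrow\quad
G_n \text{ contains an induced rooted copy of some } M_0\in\{M\},
\]
which is exactly the condition in Definition~\ref{def:motif-defined-property}. Proposition~\ref{prop:motif-defined-overlay-monotone} then gives overlay-monotonicity at once.

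For a self-contained argument, I will also run the transport step directly. Suppose $G_n$ contains an induced subgraph $H$ with a root-preserving isomorphism $\varphi:H\to M$. Fix $k\ge 1$ and $\tau\vdash k$. By Theorem~\ref{thm:translation-overlay} and Corollary~\ref{cor:finite-pattern-persistence}, $T_\tau(H)$ is an induced subgraph of $G_{n+k}$, and $T_\tau|_H:H\to T_\tau(H)$ is a graph isomorphism. The composite $\varphi\circ(T_\tau|_H)^{-1}$ is then an isomorphism $T_\tau(H)\to M$. It sends the image of the root data of $H$, whether a single vertex or an ordered tuple, to the root data of $M$, so it is root-preserving. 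Hence $G_{n+k}$ has $\mathcal P_M$.

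One point needs a remark. Definition~\ref{def:overlay-monotone} quantifies over every $\tau\vdash k$, but its conclusion, that $G_{n+k}$ has the property, does not depend on $\tau$. A single overlay, for example $R_k=T_{(k)}$, therefore already suffices, and this exists for every $k\ge 1$.

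I expect no real obstacle. The only care needed is in the root data: it must be transported by the same map $T_\tau$ as the underlying graph, so that the rooted structure and not just the abstract graph is preserved. This holds because $T_\tau$ is injective.
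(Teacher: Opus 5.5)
Your proposal is correct and follows the paper's route: the corollary is the special case $\mathcal M=\{M\}$ of Proposition~\ref{prop:motif-defined-overlay-monotone}, and your direct transport argument simply unpacks that proposition's proof via Corollary~\ref{cor:finite-pattern-persistence}. Your remark that the conclusion does not depend on $\tau$, so a single overlay such as $R_k$ already suffices, is also accurate.
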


\begin{remark}
\label{rem:finitely-witnessed-scope}
The role of Definitions~\ref{def:overlay-monotone} and \ref{def:motif-defined-property} is structural rather than merely formal. They identify a class of properties for which comparison across levels is exact, in contrast to broader morphological phenomena that will later be treated only at atlas level.
\end{remark}

\subsection{Persistence of rooted motifs and local patterns}
\label{subsec:persistence-rooted-motifs}

Corollary~\ref{cor:finite-pattern-persistence} already gives the rooted version needed later. If $M$ is a fixed finite rooted induced graph and $G_n$ contains an induced rooted copy of $M$, then every translation overlay $T_\tau$ carries this copy to an induced rooted copy of $M$ in $G_{n+|\tau|}$. Thus every fixed rooted induced motif, once realized, persists at all higher levels.

\begin{remark}
\label{rem:rooted-vs-unrooted}
The rooted formulation is often essential even when the underlying graph-theoretic shape is small. The same abstract triangle, for example, may serve as a boundary-corner witness, a rear-root witness, or a simplex witness, depending on the designated root and the intended morphological interpretation.
\end{remark}

This distinction between graph-theoretic shape and morphological interpretation will be used repeatedly in the canonical motif package later in the paper.

\subsection{Partial transport and canonical motif families}
\label{subsec:partial-transport}

The overlay mechanism is most useful when one does not attempt to transport the whole graph $G_n$, but only selected finite subsystems.

\begin{definition}
\label{def:persistent-family}
A family of finite rooted motifs $\mathcal F$ is called \emph{persistent} if for every $M\in\mathcal F$, whenever $M$ occurs at some level $G_n$, it also occurs at every higher level $G_m$, $m\ge n$.
\end{definition}

By the translation-overlay theorem, every family of fixed finite rooted induced motifs is persistent in this sense.

For the present paper, this observation will be applied to a small canonical list of motifs drawn from several structural zones: simplex witnesses, boundary corners, weak axial templates, rear-root templates, and square-based contour templates. The theorem-bearing persistence statements apply only to the rooted templates themselves. Their stronger typed or anchored interpretations will be treated separately in the atlas and morphological discussion.

\begin{remark}
\label{rem:canonical-family-two-layers}
This separation between a theorem-safe rooted template and a stronger morphological interpretation is one of the main structural conventions of the paper.
\end{remark}

\subsection{Strong transport, weak transport, and correspondence}
\label{subsec:strong-weak-transport}

The overlay language naturally leads to a hierarchy of transport notions.

\paragraph{Exact transport.}
This is the strongest case: a finite rooted induced subgraph is carried by a Ferrers translation $T_\tau$ to an induced rooted copy at the higher level. This is the situation covered directly by Theorem~\ref{thm:translation-overlay} and its corollaries.

\paragraph{Partial exact transport.}
Here the transported object is not the whole graph $G_n$, but a selected induced subgraph or rooted motif. This remains fully rigorous and is the main form of transport used later in the paper.

\paragraph{Typed transport.}
In many situations the object being tracked carries additional morphological information: a root may be required to lie on the boundary framework, to be self-conjugate, to be rectangular, or to be square. Such transport may still be exact in special cases, but it does not follow automatically from the overlay theorem and must therefore be checked separately.

\paragraph{Correspondence-level transport.}
At the weakest level, one compares structures across $n$ by an explicit rule, an atlas convention, or a visual alignment, without claiming inducedness or a globally defined embedding. This is the appropriate language for many broader questions concerning the framework, the spine, the rear contour, or the central region beyond the theorem-safe rooted templates.

\begin{remark}
\label{rem:transport-hierarchy}
The paper uses this hierarchy consistently:
\begin{enumerate}[leftmargin=2em]
    \item exact transport for strict overlays and their rooted submotifs;
    \item partial exact transport for finitely witnessed local structure;
    \item typed transport only when separately justified;
    \item correspondence-level comparison for broader morphology not covered by the strict overlay mechanism.
\end{enumerate}
This hierarchy is one of the main safeguards against overinterpreting recurring visual patterns as theorem-level structural persistence.
\end{remark}

\subsection{Directional transport and structural asymmetry}
\label{subsec:directional-transport}

Although every Ferrers translation defines a strict overlay, not every morphological zone is naturally transported by the same translation family. This is already visible in the two basic one-directional families $R_k$ and $C_k$.

On the left side of the graph, the canonical boundary-corner motifs are naturally aligned with row-growth overlays, while on the right side the conjugate motifs are naturally aligned with column-growth overlays. Thus even the front framework is directionally split. This supports the broader anisotropy picture developed elsewhere in the series: transport across levels is often exact, but it is not globally uniform \cite{Lyudogovskiy2026DirectionalGeometry,Lyudogovskiy2026AxialMorphology,Lyudogovskiy2026BoundaryRear}.

The same caution applies more generally to axial and rear structures. A finite rooted template extracted from an axial or rear configuration may persist under all overlays as a graph-theoretic object, while its stronger typed meaning---for example, ``bridge between self-conjugate anchors'' or ``rear-rooted rectangular seed''---may depend on additional conditions not automatically preserved by arbitrary translations.

\begin{remark}
\label{rem:directional-asymmetry}
The partition graph family is therefore not only growing but directionally organized. The strict overlay mechanism is rich enough to support exact transport of finite motifs, yet too coarse to provide a single universal rule for all morphologically distinguished subsystems.
\end{remark}

\subsection{Output of the section}
\label{subsec:persistence-output}

This section isolates the finite-witness mechanism behind persistence and fixes the transport hierarchy used later. For the next section, the essential takeaway is short: finitely witnessed overlay-monotone properties admit a strict threshold theory, whereas broader morphogenetic comparisons generally remain at correspondence or atlas level.

\section{Emergence thresholds and first appearances}
\label{sec:thresholds}

The translation-overlay formalism allows one to distinguish sharply between two notions of first appearance in the partition graph family $(G_n)$. Some properties are carried forward by all strict overlays, or at least by a specified overlay family. For such properties, the first appearance is automatically stable: once the property is realized, it persists at all higher levels allowed by the transport mechanism. By contrast, many global or exact-value morphological features are not known to be overlay-monotone. Their first appearances may still be tracked computationally, but such observations belong to the atlas layer rather than to the strict threshold theory. In this section we make this distinction explicit. We first record the threshold consequences of the general overlay results, then apply them to extremal local complexity and to a small canonical family of motifs used throughout the atlas.

\subsection{Strict thresholds and stable first appearances}
\label{subsec:strict-thresholds}

Recall that a property $\mathcal P$ of the level graphs $G_n$ is called overlay-monotone if
\[
G_n \text{ has } \mathcal P
\quad\Longrightarrow\quad
G_{n+k} \text{ has } \mathcal P
\]
for every $k\ge 1$, or more generally under a specified family of translation overlays.

For such a property, the emergence threshold
\[
n_*(\mathcal P)=\min\{n\ge 1:\; G_n \text{ has } \mathcal P\},
\]
when it exists, coincides with the stable threshold
\[
n_*^{\mathrm{st}}(\mathcal P)=
\min\{n\ge 1:\; G_m \text{ has } \mathcal P \text{ for all } m\ge n\}.
\]

\begin{proposition}
\label{prop:strict-thresholds-general}
Let $\mathcal P$ be an overlay-monotone property. If $n_*(\mathcal P)$ exists, then
\[
n_*^{\mathrm{st}}(\mathcal P)=n_*(\mathcal P).
\]
In particular, every finitely witnessed rooted induced motif has a stable first-appearance threshold.
\end{proposition}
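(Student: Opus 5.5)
The plan is to prove the two inequalities $n_*^{\mathrm{st}}(\mathcal P)\ge n_*(\mathcal P)$ and $n_*^{\mathrm{st}}(\mathcal P)\le n_*(\mathcal P)$ separately, directly from the definitions, and then to deduce the motif statement from Corollary~\ref{cor:fixed-motif-overlay-monotone}.

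\emph{Lower bound.} Suppose $n_*^{\mathrm{st}}(\mathcal P)$ exists, and set $n_0=n_*^{\mathrm{st}}(\mathcal P)$. By definition $G_m$ has $\mathcal P$ for every $m\ge n_0$, and in particular $G_{n_0}$ has $\mathcal P$. Hence $n_0$ belongs to the set whose minimum defines $n_*(\mathcal P)$, so $n_*(\mathcal P)\le n_0$.

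\emph{Upper bound.} Set $n_1=n_*(\mathcal P)$, so that $G_{n_1}$ has $\mathcal P$. We check that $G_m$ has $\mathcal P$ for every $m\ge n_1$. The case $m=n_1$ is given. For $m>n_1$, write $k=m-n_1\ge 1$ and pick any $\tau\vdash k$, for instance $\tau=(k)$, so that $T_\tau=R_k$. Overlay-monotonicity (Definition~\ref{def:overlay-monotone}) then shows that $G_{n_1+k}=G_m$ has $\mathcal P$. Thus $n_1$ lies in the set defining $n_*^{\mathrm{st}}(\mathcal P)$. This shows that $n_*^{\mathrm{st}}(\mathcal P)$ exists and satisfies $n_*^{\mathrm{st}}(\mathcal P)\le n_1$. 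Combining the two inequalities gives equality.

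\emph{Main obstacle.} The only real subtlety is the variant ``under a specified family of translation overlays'' mentioned just before the statement. In that setting the second step needs, for each $k\ge 1$, at least one admissible $\tau\vdash k$ in the family. Both $R_k$ and $C_k$ supply such a $\tau$, so I will state the result for the full notion of Definition~\ref{def:overlay-monotone}, which is what the proposition assumes. I will add a one-line remark that the argument goes through verbatim for any family that contains a translation of every size $k$.

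\emph{Motif statement.} A finitely witnessed rooted induced motif $M$ defines the property $\mathcal P_M$, which is overlay-monotone by Corollary~\ref{cor:fixed-motif-overlay-monotone}, equivalently by Proposition~\ref{prop:motif-defined-overlay-monotone}. Applying the first part shows that whenever $M$ occurs at some level, $n_*(\mathcal P_M)$ exists and coincides with the stable threshold.
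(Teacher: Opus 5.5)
Your proposal is correct and is the same argument as the paper's: there, overlay-monotonicity carries $\mathcal P$ from level $n_*(\mathcal P)$ to every higher level, and the motif clause comes from the Translation Overlay Theorem applied to a fixed witness, which is overlay-monotonicity of $\mathcal P_M$. Your explicit two-inequality bookkeeping and your caveat that the motif must actually occur at some level are correct refinements that the paper leaves implicit; for the restricted-family variant, having $T_{(1)}$ in the family already suffices by iteration.
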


\begin{proof}
Once $\mathcal P$ occurs at the first level $n_*(\mathcal P)$, overlay-monotonicity implies that it holds at every higher level. Thus $n_*^{\mathrm{st}}(\mathcal P)=n_*(\mathcal P)$. The second statement follows from the Translation Overlay Theorem applied to a fixed rooted induced witness.
\end{proof}

\begin{remark}
\label{rem:first-vs-stable-general}
Proposition~\ref{prop:strict-thresholds-general} applies only to properties that are genuinely overlay-monotone, or at least to properties with a transported finite witness. It does not apply automatically to exact-value statements such as ``a vertex of exact degree $d$ occurs'', nor to global shape statements about the entire framework, spine, rear region, or central region.
\end{remark}

\subsection{Thresholds for extremal local complexity}
\label{subsec:extremal-thresholds}

Let
\[
\Delta_n=\max_{\lambda\vdash n}\deg(\lambda),\qquad
\Omega_n=\max_{\lambda\vdash n}\omega_{\mathrm{loc}}(\lambda),\qquad
S_n=\max_{\lambda\vdash n}s_{\mathrm{loc}}(\lambda),
\]
where $\omega_{\mathrm{loc}}(\lambda)$ is the maximum size of a clique containing $\lambda$, and $s_{\mathrm{loc}}(\lambda)=\omega_{\mathrm{loc}}(\lambda)-1$ is the local simplex dimension at $\lambda$. Compare the earlier local and degree-oriented papers in the series \cite{Lyudogovskiy2026LocalMorphology,Lyudogovskiy2026SimplexStratification,Lyudogovskiy2026DegreeLandscape}.

By the monotonicity result proved in the previous section, all three sequences are nondecreasing in $n$. This immediately yields a threshold language for lower bounds on extremal local complexity.

For $d,r,s\ge 0$, define
\[
n_\Delta(d)=\min\{n\ge 1:\; \Delta_n\ge d\},
\]
\[
n_\Omega(r)=\min\{n\ge 1:\; \Omega_n\ge r\},
\]
\[
n_S(s)=\min\{n\ge 1:\; S_n\ge s\},
\]
whenever these sets are nonempty.

\begin{corollary}
\label{cor:extremal-thresholds}
Whenever they exist, the thresholds $n_\Delta(d)$, $n_\Omega(r)$, and $n_S(s)$ are stable. More precisely,
\[
\Delta_n\ge d \text{ for some } n
\quad\Longrightarrow\quad
\Delta_m\ge d \text{ for all } m\ge n,
\]
and likewise
\[
\Omega_n\ge r \text{ for some } n
\quad\Longrightarrow\quad
\Omega_m\ge r \text{ for all } m\ge n,
\]
\[
S_n\ge s \text{ for some } n
\quad\Longrightarrow\quad
S_m\ge s \text{ for all } m\ge n.
\]
Equivalently,
\[
n_*^{\mathrm{st}}(\Delta\ge d)=n_\Delta(d),\qquad
n_*^{\mathrm{st}}(\Omega\ge r)=n_\Omega(r),\qquad
n_*^{\mathrm{st}}(S\ge s)=n_S(s).
\]
\end{corollary}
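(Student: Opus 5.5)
The argument reduces entirely to Proposition~\ref{prop:extremal-local-monotonicity}, the monotonicity of the extremal sequences. We treat the three quantities in parallel. Take $\Delta$ first. Suppose $\Delta_n\ge d$ for some $n$, and let $m\ge n$. If $m=n$ there is nothing to prove. Otherwise write $m=n+k$ with $k\ge 1$, and apply Proposition~\ref{prop:extremal-local-monotonicity}:
\[
\Delta_m=\Delta_{n+k}\ge \Delta_n\ge d .
\]
Concretely, the witness is the image $T_\tau(\lambda)$, for any $\tau\vdash k$, of a maximizer $\lambda\vdash n$; Lemma~\ref{lem:local-quantities-nondecrease} shows that this vertex has degree at least $d$. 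The same two-line argument, using $\Omega_{n+k}\ge\Omega_n$ and $S_{n+k}\ge S_n$, gives the implications for $\Omega$ and $S$.

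Next we identify the thresholds. Let $\mathcal P_d$ be the property ``$\Delta_n\ge d$''. The implication just proved says that $\mathcal P_d$ is overlay-monotone in the sense of Definition~\ref{def:overlay-monotone}, since it passes from $G_n$ to every $G_{n+k}$. Proposition~\ref{prop:strict-thresholds-general} therefore gives $n_*^{\mathrm{st}}(\mathcal P_d)=n_*(\mathcal P_d)$. By definition $n_*(\mathcal P_d)=n_\Delta(d)$.

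This can also be checked directly. Put $n_0=n_\Delta(d)$. The set of $n$ with $\Delta_m\ge d$ for all $m\ge n$ contains $n_0$ by the first step. It contains no $n<n_0$, because then $\Delta_{n}<d$ by minimality of $n_0$. Hence its minimum is $n_0$. The identities $n_*^{\mathrm{st}}(\Omega\ge r)=n_\Omega(r)$ and $n_*^{\mathrm{st}}(S\ge s)=n_S(s)$ follow in the same way. For $S$ one may alternatively use $S_n=\Omega_n-1$, which gives $n_S(s)=n_\Omega(s+1)$.

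There is no real obstacle here. The only care needed is to respect the hypothesis ``whenever they exist'': the stable threshold is defined exactly when the emergence threshold is, since a witness level for either set is a witness for the other.
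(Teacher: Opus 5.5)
Your proof is correct and follows the paper's route: everything reduces to the monotonicity of $(\Delta_n)$, $(\Omega_n)$, $(S_n)$ from Proposition~\ref{prop:extremal-local-monotonicity}, after which the stable threshold coincides with the first threshold. Your explicit check of $n_*^{\mathrm{st}}=n_\Delta(d)$ (and its analogues) and your remark that $S_n=\Omega_n-1$ are both correct; they only spell out what the paper's one-line proof leaves implicit.
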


\begin{proof}
Since the sequences $(\Delta_n)$, $(\Omega_n)$, and $(S_n)$ are nondecreasing, each lower-bound property persists once it is realized.
\end{proof}

\begin{remark}
\label{rem:record-vs-spectrum}
Corollary~\ref{cor:extremal-thresholds} applies to lower-bound thresholds for extremal quantities. It does not imply that the set of realized degree values, the full degree spectrum, or the full set of realized local simplex dimensions is monotone in $n$. Those finer first-appearance questions remain computational in nature.
\end{remark}

\subsection{Canonical motifs and threshold applications}
\label{subsec:canonical-motifs-thresholds}

We now apply the threshold formalism to a small canonical package drawn from boundary, axial, and rear morphology. This package contains two different kinds of objects:
\begin{itemize}[leftmargin=2em]
    \item the fixed rooted templates $P2^\circ$, $E1^\circ$, and $R_{sq}^\circ$;
    \item the level-dependent canonical boundary-corner families $(BL1_n)_{n\ge 4}$ and $(BR1_n)_{n\ge 4}$.
\end{itemize}

For a fixed rooted template $M$, we write
\[
\mathcal P_M(n)
\]
for the property that $G_n$ contains an induced rooted copy of $M$. For the boundary-corner families, the notation $BL1$ or $BR1$ refers to the corresponding canonical realization at level $n$, namely $BL1_n$ or $BR1_n$ as defined below.

\paragraph{Canonical boundary-corner motifs.}
For every $n\ge 4$, define
\[
b_0^{(n)}=(n-1,1),\qquad
b_1^{(n)}=(n-2,1,1),\qquad
b_2^{(n)}=(n-2,2),
\]
and let $BL1_n$ denote the rooted motif on these three vertices, rooted at $b_0^{(n)}$, with ordered secondary vertices $(b_1^{(n)},b_2^{(n)})$. Thus $BL1_n$ is the minimal left boundary-corner motif.

Similarly, define
\[
c_0^{(n)}=(2,1^{n-2}),\qquad
c_1^{(n)}=(3,1^{n-3}),\qquad
c_2^{(n)}=(2,2,1^{n-4}),
\]
and let $BR1_n$ denote the rooted motif on these three vertices, rooted at $c_0^{(n)}$, with ordered secondary vertices $(c_1^{(n)},c_2^{(n)})$. This is the minimal right boundary-corner motif; compare the boundary framework and rear-morphology language in \cite{Lyudogovskiy2026BoundaryRear}.

\begin{proposition}
\label{prop:BL1-BR1-canonical}
For every $n\ge 4$, the three vertices
\[
(n-1,1),\qquad (n-2,1,1),\qquad (n-2,2)
\]
form an induced triangle in $G_n$, and the three vertices
\[
(2,1^{n-2}),\qquad (3,1^{n-3}),\qquad (2,2,1^{n-4})
\]
form an induced triangle in $G_n$.

Equivalently, the canonical motifs $BL1_n$ and $BR1_n$ are well defined for every $n\ge 4$.
\end{proposition}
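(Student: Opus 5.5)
The plan is to verify the three adjacencies directly in part coordinates, and then get the right-hand triangle from the left one via conjugation (Lemma~\ref{lem:conjugation-automorphism}). Note that a triangle on three vertices is automatically induced, since all three possible edges are present. So ``induced triangle'' just means that the three vertices are distinct and pairwise adjacent in $G_n$.

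\textbf{Left triangle.} First, for $n\ge 4$ the three partitions $(n-1,1)$, $(n-2,1,1)$ and $(n-2,2)$ are distinct partitions of $n$. Here $n-2\ge 2$ guarantees that $(n-2,2)$ is correctly ordered. It is not the same as $(n-1,1)$, and it has a different length from $(n-2,1,1)$. Padding with zeros to length $3$, we check each pair.
\begin{itemize}[leftmargin=2em]
\item From $(n-1,1,0)$ to $(n-2,1,1)$: move one unit from the first part to the zero part.
\item From $(n-1,1,0)$ to $(n-2,2,0)$: move one unit from the first part to the second.
\item From $(n-2,1,1)$ to $(n-2,2,0)$: move one unit from the third part to the second. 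This deletes a part, which the definition of $G_n$ explicitly allows.
\end{itemize}
In each case the difference of the padded part vectors has $\ell_1$-norm $2$, with no reordering needed. So each pair is related by a single elementary transfer and is adjacent.

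\textbf{Right triangle.} I would compute conjugates rather than recheck transfers. The conjugates are
\[
(n-1,1)'=(2,1^{n-2}),\qquad (n-2,1,1)'=(3,1^{n-3}),\qquad (n-2,2)'=(2,2,1^{n-4}).
\]
The last one uses $n-2\ge 2$: its columns have heights $2,2$ and then $n-4$ columns of height $1$. These are exactly the vertices $c_0^{(n)},c_1^{(n)},c_2^{(n)}$. Since conjugation is a graph automorphism of $G_n$ (Lemma~\ref{lem:conjugation-automorphism}), it maps the induced triangle $\{b_0,b_1,b_2\}$ onto an induced triangle. Moreover $b_i'=c_i$, so the root and the ordered secondary vertices correspond: $BR1_n$ is the conjugate image of $BL1_n$. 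Alternatively, one could apply Lemma~\ref{lem:adjacency-conjugate-l1}(3) directly to the conjugate coordinates of the left triangle.

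\textbf{Main obstacle.} The only delicate point is the boundary case $n=4$. There $(n-2,2)=(2,2)$, so $c_2^{(4)}=(2,2)$ coincides with $b_2^{(4)}$, and we must make sure nothing degenerates. Distinctness within each triple still holds. The two motifs may share vertices at this level (for instance $(2,1,1)$ appears in both), but the statement does not claim they are disjoint. I would check $n=4$ explicitly:
\begin{itemize}[leftmargin=2em]
\item left triple: $(3,1)$, $(2,1,1)$, $(2,2)$;
\item right triple: $(2,1,1)$, $(3,1)$, $(2,2)$.
\end{itemize}
This is the same vertex set with the root changed, and the rooted motifs are still well defined. For $n\ge 5$ all the formulas are generic.
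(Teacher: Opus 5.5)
Your proof is correct and follows the paper's route: explicit unit transfers give the three left-hand adjacencies, and conjugation symmetry (since $b_i'=c_i$) handles the right-hand triangle, with pairwise adjacency of three vertices automatically giving an induced triangle. Your extra checks of distinctness and of the case $n=4$, where the two triples coincide as vertex sets, are valid refinements that the paper leaves implicit.
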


\begin{proof}
On the left side, the three adjacencies can be written explicitly as follows. Starting from $(n-1,1,0)$, transfer one unit from the first part to the third part to obtain $(n-2,1,1)$; thus $(n-1,1)$ is adjacent to $(n-2,1,1)$. Starting from $(n-1,1)$, transfer one unit from the first part to the second part to obtain $(n-2,2)$; thus $(n-1,1)$ is adjacent to $(n-2,2)$. Finally, starting from $(n-2,1,1)$, transfer one unit from one of the parts equal to $1$ to the other to obtain $(n-2,2,0)$, which reorders to $(n-2,2)$ after deleting the zero part. Hence these three vertices form a triangle.

The right-hand statement follows either by the same direct verification or by conjugation symmetry, since the listed right-hand vertices are exactly the conjugates of the left-hand ones. As each motif has only three vertices, pairwise adjacency already implies that the induced subgraph is a triangle. \qedhere
\end{proof}

\begin{corollary}
\label{cor:BL1-BR1-thresholds}
The canonical boundary-corner motifs satisfy
\[
n_*(BL1)=n_*^{\mathrm{st}}(BL1)=4,
\qquad
n_*(BR1)=n_*^{\mathrm{st}}(BR1)=4.
\]
\end{corollary}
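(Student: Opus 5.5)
The proof has two parts: existence at level $4$ together with persistence, and absence below level $4$.

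For the upper bound, Proposition~\ref{prop:BL1-BR1-canonical} already shows that $BL1_n$ and $BR1_n$ are well-defined induced triangles for every $n\ge 4$. So the property "the canonical motif $BL1_n$ (resp.\ $BR1_n$) is realized at level $n$" holds at $n=4$ and at every $m\ge 4$. This gives stability at once: $n_*^{\mathrm{st}}\le 4$, and $n_*\le 4$ as well.

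It is also worth recording that the persistence agrees with the overlay mechanism. The row-growth overlay $R_1=T_{(1)}$ appends $1$ to $\lambda'_1$, which in row coordinates adds a cell to the first part. Hence $R_1$ sends $(n-1,1)\mapsto(n,1)$, $(n-2,1,1)\mapsto(n-1,1,1)$ and $(n-2,2)\mapsto(n-1,2)$, that is, $BL1_n\mapsto BL1_{n+1}$ as rooted motifs with the ordered secondary vertices respected. Dually, $C_1=T_{(1^1)}$ adds a new part $1$, since $\tau'=(1)$ and $\lambda'+\tau'$ raises the largest conjugate part. It therefore maps $(2,1^{n-2})\mapsto(2,1^{n-1})$ and likewise for the other two vertices, so $BR1_n\mapsto BR1_{n+1}$. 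This illustrates the typed transport remarked on in Section~\ref{subsec:directional-overlays}: the level-dependent family is carried by a specific overlay, not only abstractly. It is consistent with Proposition~\ref{prop:strict-thresholds-general}, though that proposition is not logically needed here, because Proposition~\ref{prop:BL1-BR1-canonical} covers all $n\ge 4$ directly.

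For the lower bound, I would show that the canonical motif is not defined (or not realized) for $n\le 3$. The only real subtlety is interpretive: $BL1_n$ is defined only for $n\ge 4$, because for $n=3$ the vertex $(n-2,2)=(1,2)$ is not a partition, and $(n-2,1,1)=(1,1,1)$ collapses onto a degenerate configuration. I would also note the stronger fact that $G_n$ for $n\le 3$ contains no triangle at all, so even the underlying rooted shape cannot appear there. The graphs are $G_1=K_1$, $G_2=K_2$, and $G_3$ is the path $(3)-(2,1)-(1,1,1)$; the pair $(3),(1,1,1)$ is nonadjacent because their conjugates $(1,1,1)$ and $(3)$ have $\ell^1$ distance $4$ by Lemma~\ref{lem:adjacency-conjugate-l1}. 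The same holds for $BR1$, by conjugation (Lemma~\ref{lem:conjugation-automorphism}) or by the identical check. This gives $n_*=4$, and combined with the first part, $n_*^{\mathrm{st}}=4$.

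I expect the main obstacle to be making precise which property the threshold refers to, since the motif depends on the level. The cleanest route is the triangle-free argument for $n\le 3$, because it works under either reading: whether the property is "the canonical realization exists" or "an induced rooted triangle of this type occurs", it fails below level $4$.
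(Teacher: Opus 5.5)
Your argument for the corollary is correct. The upper-bound and stability half is exactly the paper's proof: the motif is realized at $n=4$ by $(3,1),(2,1,1),(2,2)$ and its conjugate, and Proposition~\ref{prop:BL1-BR1-canonical} covers every $n\ge 4$. The paper never argues the lower bound; it relies implicitly on the families being indexed from $n=4$. Your observation that $G_1=K_1$, $G_2=K_2$ and $G_3$ is the path $(3)-(2,1)-(1,1,1)$, hence triangle-free, supplies that half cleanly and under either reading of the threshold. In that respect your version is more complete than the paper's.

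Your side remark on $R_1$, however, is wrong and should be removed. Since $(\lambda\cup\tau)'=\lambda'+\tau'$, the translation $T_\tau$ simply adjoins the parts of $\tau$ to $\lambda$. Incrementing $\lambda'_1$ lengthens the first column, which adds a new part equal to $1$; it does not add a cell to the first part. So $R_1=T_{(1)}$ and $C_1=T_{(1^1)}$ are literally the same map; you yourself describe both as raising $\lambda'_1$, yet give them different row-coordinate meanings. Concretely, $R_1(n-1,1)=(n-1,1,1)$, not $(n,1)$.

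More generally, $T_\tau(\lambda)$ has $\ell(\lambda)+\ell(\tau)$ parts, so no Ferrers translation can carry the two-part root $(n-1,1)$ of $BL1_n$ to the two-part root $(n+k-1,1)$ of $BL1_{n+k}$. The map that does send $BL1_n$ onto $BL1_{n+1}$ is $\lambda\mapsto(\lambda_1+1,\lambda_2,\dots)$. This is the conjugate of $C_1$: an induced embedding, but not of the form $T_\tau$. Consequently the left-hand half of the directional remark you cite cannot be read as literal typed transport by $R_k$.

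Your claim for $BR1$ and $C_1$ is correct; indeed $C_k$ carries $BR1_n$ to $BR1_{n+k}$. Since you explicitly flagged the $R_1$ remark as not logically needed, the proof of the corollary itself is unaffected.
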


\begin{proof}
At $n=4$, the left-hand motif is realized by
\[
(3,1),\qquad (2,1,1),\qquad (2,2),
\]
and the right-hand motif is its conjugate realization. Proposition~\ref{prop:BL1-BR1-canonical} shows that both motifs occur canonically for every $n\ge 4$. Hence the first threshold and the stable threshold both equal $4$.
\end{proof}

\paragraph{Fixed theorem-safe weak templates.}
We next use three weak rooted templates extracted from the first nontrivial occurrences in the computational atlas. Their stronger morphological interpretations are related to earlier axial and rear-structure lines in the series \cite{Lyudogovskiy2026AxialMorphology,Lyudogovskiy2026BoundaryRear}.

\begin{itemize}[leftmargin=2em]
    \item $P2^\circ$ is the rooted three-vertex path
    \[
    x_0-x_1-x_2,
    \]
    with ordered roots $(x_0,x_2)$, modeled on
    \[
    (3,3,2)\;-
\;(4,3,1)\;-
\;(4,2,1,1)
    \]
    in $G_8$.

    \item $E1^\circ$ is the rooted ordered triangle modeled on
    \[
    (2,2,2),\qquad (3,2,1),\qquad (2,2,1,1)
    \]
    in $G_6$, with root $(2,2,2)$.

    \item $R_{sq}^\circ$ is the rooted five-vertex induced motif modeled on
    \[
    q=(3,3,3),\quad
    u=(4,3,2),\quad
    v=(3,3,2,1),\quad
    w_1=(4,3,1,1),\quad
    w_2=(4,2,2,1)
    \]
    in $G_9$, with root $q$, where $q$ is adjacent to $u$ and $v$, and the four vertices
    \[
    \{u,v,w_1,w_2\}
    \]
    span a $K_4$.
\end{itemize}

\begin{proposition}
\label{prop:weak-template-realizations}
The vertex sets listed above realize the claimed rooted induced motifs in the stated graphs.
More precisely:
\begin{enumerate}[leftmargin=2em]
    \item in $G_8$, the vertices
    \[
    (3,3,2),\qquad (4,3,1),\qquad (4,2,1,1)
    \]
    form an induced copy of the rooted path $P2^\circ$ with ordered roots
    \[
    x_0=(3,3,2),\qquad x_2=(4,2,1,1);
    \]
    \item in $G_6$, the vertices
    \[
    (2,2,2),\qquad (3,2,1),\qquad (2,2,1,1)
    \]
    form an induced rooted triangle $E1^\circ$ with root $(2,2,2)$;
    \item in $G_9$, the vertices
    \[
    q=(3,3,3),\quad
    u=(4,3,2),\quad
    v=(3,3,2,1),\quad
    w_1=(4,3,1,1),\quad
    w_2=(4,2,2,1)
    \]
    form the rooted induced motif $R_{sq}^\circ$: the root $q$ is adjacent exactly to $u$ and $v$, and the four vertices
    \[
    \{u,v,w_1,w_2\}
    \]
    span a $K_4$.
\end{enumerate}
\end{proposition}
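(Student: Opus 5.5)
The plan is to reduce every adjacency check to an $\ell^1$ computation on the partitions themselves, not on their conjugates. Apply Lemma~\ref{lem:adjacency-conjugate-l1} to the pair $\lambda',\mu'$. Since $(\lambda')'=\lambda$, it says that $\lambda'$ and $\mu'$ are adjacent in $G_n$ if and only if $\|\lambda-\mu\|_1=2$, after padding with trailing zeros. By Lemma~\ref{lem:conjugation-automorphism}, $\lambda'\sim\mu'$ holds if and only if $\lambda\sim\mu$. Hence
\[
\lambda\sim\mu \iff \|\lambda-\mu\|_1=2,
\]
where $\lambda$ and $\mu$ are written as weakly decreasing vectors padded to a common length. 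This is the one conceptual step, and I expect it to be the only real obstacle. Its content is that "transfer followed by reordering" is captured by the sorted vectors. This is already encoded in the earlier lemmas, so invoking them as above avoids redoing the sorting argument. Once the criterion is in place, each item reduces to a finite table of $\ell^1$ distances. The value $2$ means adjacent, and any value $\ge 4$ means non-adjacent; the distance is always even because the sums agree.

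For item (1), pad to length $4$. The pair $(3,3,2,0),(4,3,1,0)$ has distance $1+0+1+0=2$, and the pair $(4,3,1,0),(4,2,1,1)$ has distance $0+1+0+1=2$. The pair $(3,3,2,0),(4,2,1,1)$ has distance $1+1+1+1=4$, so the two roots are non-adjacent. The induced subgraph is therefore the path $x_0-x_1-x_2$ with $x_0=(3,3,2)$ and $x_2=(4,2,1,1)$ as the ordered roots.

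For item (2), pad to length $4$ again. The three distances are
\[
\|(2,2,2,0)-(3,2,1,0)\|_1=2,\quad
\|(2,2,2,0)-(2,2,1,1)\|_1=2,\quad
\|(3,2,1,0)-(2,2,1,1)\|_1=2.
\]
So the three vertices form a triangle, and on three vertices this is automatically induced.

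For item (3), write $q=(3,3,3,0)$ and compute all ten distances among the five vertices. For the root,
\[
\|q-u\|_1=\|q-v\|_1=2,\qquad \|q-w_1\|_1=\|q-w_2\|_1=4,
\]
so $q$ is adjacent exactly to $u$ and $v$. For the remaining six pairs among $u=(4,3,2,0)$, $v=(3,3,2,1)$, $w_1=(4,3,1,1)$ and $w_2=(4,2,2,1)$, each difference vector has exactly one entry $+1$ and one entry $-1$. For example, $u-w_2=(0,1,0,-1)$ and $v-w_1=(-1,0,1,0)$. All six distances are therefore $2$, and $\{u,v,w_1,w_2\}$ spans a $K_4$. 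Together with the root data, this is precisely the rooted induced motif $R_{sq}^\circ$. The proof would record these computations compactly, perhaps as a single distance table, to complete the verification.
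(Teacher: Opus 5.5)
Your proposal is correct and is essentially the paper's proof: both reduce each adjacency to an $\ell^1$-distance check via \Cref{lem:adjacency-conjugate-l1} and then verify a finite table of distances, and all of your distances are right. The only difference is cosmetic: you move the criterion onto the partitions themselves using \Cref{lem:conjugation-automorphism} and $\lambda''=\lambda$, which spares you computing conjugates, whereas the paper computes the conjugates explicitly (in item (3) conjugation just swaps $u\leftrightarrow v$ and $w_1\leftrightarrow w_2$, so the two distance tables coincide up to relabeling).
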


\begin{proof}
We use \Cref{lem:adjacency-conjugate-l1} throughout. When two conjugate sequences have different lengths, we pad the shorter one with trailing zeros before computing the $L^1$ distance.

For $P2^\circ$, the conjugates are
\[
(3,3,2)'=(3,3,2),\qquad
(4,3,1)'=(3,2,2,1),\qquad
(4,2,1,1)'=(4,2,1,1).
\]
Hence
\[
\|(3,3,2)'-(4,3,1)'\|_1=2,
\qquad
\|(4,3,1)'-(4,2,1,1)'\|_1=2,
\]
while
\[
\|(3,3,2)'-(4,2,1,1)'\|_1=4.
\]
Thus the first two pairs are adjacent and the endpoints are not, so these three vertices form an induced path with the stated ordered roots.

For $E1^\circ$, the conjugates are
\[
(2,2,2)'=(3,3),\qquad
(3,2,1)'=(3,2,1),\qquad
(2,2,1,1)'=(4,2).
\]
The three pairwise $L^1$-distances are all equal to $2$, so all three pairs are adjacent. Therefore the three vertices span a triangle, rooted at $(2,2,2)$ as claimed.

For $R_{sq}^\circ$, the conjugates are
\[
q'=(3,3,3),\quad
u'=(3,3,2,1),\quad
v'=(4,3,2),\quad
w_1'=(4,2,2,1),\quad
w_2'=(4,3,1,1).
\]
A direct $L^1$ check gives
\[
\|q'-u'\|_1=\|q'-v'\|_1=2,
\qquad
\|q'-w_1'\|_1=\|q'-w_2'\|_1=4,
\]
so the root $q$ is adjacent exactly to $u$ and $v$. For the remaining four vertices one finds
\[
\|u'-v'\|_1=\|u'-w_1'\|_1=\|u'-w_2'\|_1=2,
\]
\[
\|v'-w_1'\|_1=\|v'-w_2'\|_1=\|w_1'-w_2'\|_1=2.
\]
Hence every pair among $\{u,v,w_1,w_2\}$ is adjacent, so these four vertices span a $K_4$. This proves that the five listed vertices realize the claimed rooted induced motif.
\end{proof}

\begin{proposition}
\label{prop:weak-template-overlay-monotone}
Let
\[
M\in\{P2^\circ,E1^\circ,R_{sq}^\circ\}.
\]
Then the property
\[
\mathcal P_M(n):\qquad G_n \text{ contains an induced rooted copy of } M
\]
is overlay-monotone.

Equivalently, if $G_n$ contains an induced rooted copy of $M$, then for every partition $\tau\vdash k$, the graph $G_{n+k}$ also contains an induced rooted copy of $M$.
\end{proposition}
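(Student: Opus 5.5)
This is an instance of Corollary~\ref{cor:fixed-motif-overlay-monotone}, so the plan is to check its hypotheses and then cite it, writing out the transport step once so the rooted data is visibly preserved.

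First, I will observe that each of $P2^\circ$, $E1^\circ$, $R_{sq}^\circ$ is a fixed finite rooted induced graph. $P2^\circ$ is a three-vertex path with ordered root pair $(x_0,x_2)$. $E1^\circ$ is a triangle with a single distinguished root. $R_{sq}^\circ$ is a five-vertex graph consisting of a $K_4$ on $\{u,v,w_1,w_2\}$ together with a vertex $q$ adjacent exactly to $u$ and $v$, and it has root $q$. None of these depends on $n$, unlike the level-dependent families $BL1_n$ and $BR1_n$. Therefore $\mathcal P_M$ is motif-defined, with $\mathcal M=\{M\}$ in the sense of Definition~\ref{def:motif-defined-property}, and Proposition~\ref{prop:motif-defined-overlay-monotone} (equivalently Corollary~\ref{cor:fixed-motif-overlay-monotone}) applies.

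Next, I will make the transport step explicit. Suppose $H\subseteq G_n$ is an induced subgraph and $\varphi:M\to H$ is a root-preserving isomorphism. For $\tau\vdash k$, Theorem~\ref{thm:translation-overlay} shows that $T_\tau$ restricts to an isomorphism from $H$ onto the induced subgraph $T_\tau(H)\subseteq G_{n+k}$. This uses injectivity together with the preservation and reflection of adjacency, which is exactly Corollary~\ref{cor:finite-pattern-persistence}. The composite $T_\tau\circ\varphi$ is then an isomorphism $M\to T_\tau(H)$. It sends the designated root (or ordered root tuple) of $M$ to the images of the roots of $H$, so the root data is carried along automatically.

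Two points deserve an explicit remark. For $P2^\circ$ and $R_{sq}^\circ$ the non-edges matter: the endpoints $x_0,x_2$ of the path must stay non-adjacent, and $q$ must stay non-adjacent to $w_1,w_2$. The reflection of adjacency in Theorem~\ref{thm:translation-overlay} handles this, because Lemma~\ref{lem:translation-preserves-l1} gives $\|\cdot\|_1=4$ for these pairs at every higher level.

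There is no real obstacle. The only care needed is to stress that "rooted copy" refers to the abstract root data, not to typed interpretations such as the root being self-conjugate or square. As Remark~\ref{rem:exact-transport-limitations} warns, $T_\tau$ need not preserve those, and the proposition does not claim it.
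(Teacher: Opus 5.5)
Your proposal is correct and follows essentially the same route as the paper, whose proof simply notes that $P2^\circ$, $E1^\circ$, $R_{sq}^\circ$ are fixed finite rooted induced graphs and invokes the Translation Overlay Theorem to carry any induced rooted copy in $G_n$ to one in $G_{n+k}$. Your passage through Proposition~\ref{prop:motif-defined-overlay-monotone} and the explicit composite $T_\tau\circ\varphi$ unpack that same step. Your remark on non-edges is also right: each non-adjacent pair in these motifs has a common neighbour, so by the triangle inequality and parity its conjugate $L^1$-distance is exactly $4$ in any induced copy, not just in the model realizations.
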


\begin{proof}
Each of $P2^\circ$, $E1^\circ$, and $R_{sq}^\circ$ is a fixed finite rooted induced graph. By the Translation Overlay Theorem, every induced rooted copy of such a motif in $G_n$ is carried by $T_\tau$ to an induced rooted copy in $G_{n+k}$. Hence the corresponding property is overlay-monotone.
\end{proof}

\begin{corollary}
\label{cor:weak-template-stable-thresholds}
For each
\[
M\in\{P2^\circ,E1^\circ,R_{sq}^\circ\},
\]
the stable threshold exists and satisfies
\[
n_*^{\mathrm{st}}(M)=n_*(M).
\]

Moreover, the defining realizations give the explicit bounds
\[
n_*(P2^\circ)\le 8,\qquad
n_*(E1^\circ)\le 6,\qquad
n_*(R_{sq}^\circ)\le 9.
\]
\end{corollary}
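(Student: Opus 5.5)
The plan is to combine the realization result, Proposition~\ref{prop:weak-template-realizations}, with the overlay-monotonicity result, Proposition~\ref{prop:weak-template-overlay-monotone}, and then apply the general threshold principle, Proposition~\ref{prop:strict-thresholds-general}. The argument is purely formal once these three ingredients are in hand. I treat the three templates uniformly.

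\textbf{Existence of $n_*(M)$ and the explicit bounds.} Proposition~\ref{prop:weak-template-realizations} exhibits an induced rooted copy of $P2^\circ$ in $G_8$, of $E1^\circ$ in $G_6$, and of $R_{sq}^\circ$ in $G_9$. So for each $M$ the set $\{n\ge 1:\; G_n \text{ has } \mathcal P_M\}$ is a nonempty set of positive integers. Its minimum $n_*(M)$ therefore exists and is at most the level of the defining realization. This gives $n_*(P2^\circ)\le 8$, $n_*(E1^\circ)\le 6$, and $n_*(R_{sq}^\circ)\le 9$.

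\textbf{Stability.} By Proposition~\ref{prop:weak-template-overlay-monotone}, $\mathcal P_M$ is overlay-monotone. Fix $m>n_*(M)$, set $k=m-n_*(M)\ge 1$, and take any $\tau\vdash k$, say $\tau=(k)$, which gives the row-growth overlay $R_k$. Then $T_\tau$ carries the copy of $M$ at level $n_*(M)$ to an induced rooted copy at level $m$. Hence $\mathcal P_M$ holds for every $m\ge n_*(M)$. This shows $n_*^{\mathrm{st}}(M)\le n_*(M)$.

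The reverse inequality $n_*^{\mathrm{st}}(M)\ge n_*(M)$ is trivial: the stable threshold requires the property to hold at its own level, so that level is at least the first level where the property occurs. Equivalently, the whole conclusion follows in one line from Proposition~\ref{prop:strict-thresholds-general}, now that existence of $n_*(M)$ has been established.

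\textbf{Main obstacle.} There is no real obstacle. The only points needing care are, first, that a partition $\tau\vdash k$ exists for every $k\ge 1$, which is what makes the overlay step valid at every higher level. Second, the rooted data must be preserved: $T_\tau$ maps the designated root or ordered roots to the corresponding images, which is guaranteed by Corollary~\ref{cor:finite-pattern-persistence}. I would not attempt to prove the bounds are sharp, since that would require exhaustive checks at lower levels that the statement does not claim.
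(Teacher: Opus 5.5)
Your proposal is correct and follows the same route as the paper. The paper also takes the realizations from Proposition~\ref{prop:weak-template-realizations} for existence and the bounds, uses overlay-monotonicity from Proposition~\ref{prop:weak-template-overlay-monotone}, and concludes via Proposition~\ref{prop:strict-thresholds-general}; you simply spell out the two inequalities that the paper leaves to that general proposition.
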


\begin{proof}
Proposition~\ref{prop:weak-template-realizations} gives explicit induced rooted occurrences of $P2^\circ$, $E1^\circ$, and $R_{sq}^\circ$ in $G_8$, $G_6$, and $G_9$, respectively. Proposition~\ref{prop:weak-template-overlay-monotone} shows that the corresponding properties are overlay-monotone. The conclusion therefore follows from Proposition~\ref{prop:strict-thresholds-general}.
\end{proof}

\begin{corollary}
\label{cor:canonical-motif-threshold-package}
Among the canonical motifs selected for the present paper, the following threshold statements are rigorously available:
\begin{enumerate}[leftmargin=2em]
    \item the canonical boundary-corner families $BL1$ and $BR1$ have exact thresholds
    \[
    n_*(BL1)=n_*^{\mathrm{st}}(BL1)=4,
    \qquad
    n_*(BR1)=n_*^{\mathrm{st}}(BR1)=4;
    \]
    \item the weak templates $P2^\circ$, $E1^\circ$, and $R_{sq}^\circ$ have well-defined stable thresholds satisfying
    \[
    n_*^{\mathrm{st}}(M)=n_*(M)
    \qquad
    \text{for }
    M\in\{P2^\circ,E1^\circ,R_{sq}^\circ\}.
    \]
\end{enumerate}
Thus the strict threshold core already includes representative motifs from boundary, axial, rear-root, and square-based contour morphology.
\end{corollary}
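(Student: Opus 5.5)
The statement is a packaging of results already proved, so the plan is to assemble it item by item rather than argue anything new.

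For item (1), I will invoke Corollary~\ref{cor:BL1-BR1-thresholds} directly. If one wants to see why the threshold is exactly $4$ and not smaller, the argument runs as follows. Proposition~\ref{prop:BL1-BR1-canonical} supplies the canonical triangles $BL1_n$ and $BR1_n$ for every $n\ge 4$, which gives the upper bound $n_*\le 4$ and stability from $4$ on. The lower bound is by definition: the families $(BL1_n)$ and $(BR1_n)$ are only defined for $n\ge 4$. Note that for $n=3$ the would-be vertices $(n-2,2)=(1,2)$ and $(2,2,1^{n-4})$ are not partitions of $3$. Hence the first level at which the canonical realization exists is $4$. Since the motif exists at every $n\ge 4$, the stable threshold coincides with it.

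For item (2), I will cite Corollary~\ref{cor:weak-template-stable-thresholds}. That result rests on two inputs:
\begin{enumerate}[leftmargin=2em]
    \item the explicit occurrences in Proposition~\ref{prop:weak-template-realizations}, which show that the defining sets are nonempty, so $n_*(M)$ exists and is at most $8$, $6$, $9$ respectively;
    \item overlay-monotonicity from Proposition~\ref{prop:weak-template-overlay-monotone}, itself a consequence of the Translation Overlay Theorem.
\end{enumerate}
Proposition~\ref{prop:strict-thresholds-general} then identifies $n_*^{\mathrm{st}}(M)$ with $n_*(M)$.

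One minor point needs care. Overlay-monotonicity as stated gives persistence from $n$ to $n+k$ for every $k\ge 1$, realized for instance by $\tau=(k)$. This covers all $m>n$, so the stable-threshold set really is $\{m\ge n_*(M)\}$.

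The closing sentence about morphological zones is interpretive, not a mathematical claim. I would simply note which template represents which zone: $BL1$ and $BR1$ the boundary, $P2^\circ$ the axial zone, $E1^\circ$ the rear-root zone, and $R_{sq}^\circ$ the square-based contour.

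There is no real obstacle here. The only step needing a sentence of justification is the exactness "$=4$" in item (1). The point is that $n_*(BL1)$ refers to the level-dependent canonical family, not to an abstract rooted triangle. An abstract triangle with the same rooting could appear earlier: $G_3$ is a triangle, since $(3)$, $(2,1)$ and $(1,1,1)$ are pairwise adjacent. So the proof must read the threshold through the family definition, as the paper's notation convention specifies.
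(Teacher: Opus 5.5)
Your assembly is exactly what the paper does: the corollary has no separate proof there and is simply the conjunction of Corollary~\ref{cor:BL1-BR1-thresholds} and Corollary~\ref{cor:weak-template-stable-thresholds}. Your unpacking of the latter through Propositions~\ref{prop:weak-template-realizations}, \ref{prop:weak-template-overlay-monotone} and \ref{prop:strict-thresholds-general} is also the paper's own route.

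One factual claim in your last paragraph is false, though it is not load-bearing. $G_3$ is not a triangle but the induced path $(3)-(2,1)-(1,1,1)$. The conjugates $(3)'=(1,1,1)$ and $(1,1,1)'=(3)$ are at $L^1$-distance $4$, so $(3)$ and $(1,1,1)$ differ by two unit transfers, not one.

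Since $G_1$, $G_2$, $G_3$ are all triangle-free, the value $4$ in item (1) is exact even if $BL1$ and $BR1$ are read as abstract rooted triangles. The family-based reading is therefore not needed to exclude level $3$.

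The phenomenon you had in mind does occur, but for $P2^\circ$ rather than for the triangle. That same path in $G_3$ is an induced rooted copy of $P2^\circ$ with roots $(3)$ and $(1,1,1)$, so $n_*(P2^\circ)=3$, well below the bound $8$. This is harmless for item (2), which asserts only $n_*^{\mathrm{st}}=n_*$.
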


\begin{remark}
\label{rem:strict-vs-morphological-interpretation}
The persistence statements for $P2^\circ$, $E1^\circ$, and $R_{sq}^\circ$ apply only to these objects as fixed rooted induced graph templates. They do not assert that every later occurrence remains axial, rear-rooted, or square-based in the full morphological sense.

In the computational atlas, the corresponding stronger interpretations are tracked separately through typed and anchored realizations. The theorem-safe weak templates are used only to obtain rigorous persistence and threshold statements without introducing stronger global transport claims.
\end{remark}

\subsection{Atlas thresholds and empirical first appearances}
\label{subsec:atlas-thresholds}

The strict threshold theory above should be distinguished from the empirical first-appearance data collected in the computational atlas. To avoid ambiguity, we use the term \emph{atlas threshold} for the first observed level at which a selected structural feature appears in the computed range, without claiming a general theorem of persistence.

This distinction is particularly important for the following types of features:
\begin{itemize}[leftmargin=2em]
    \item exact record values in the degree spectrum or local simplex spectrum;
    \item first appearances of richer axial bridge types such as $A1$ and $A2$;
    \item repeated spine fragments such as $P3$;
    \item nonminimal ear-root motifs such as $E2$;
    \item nonsquare rectangular rear-contour motifs such as $R_{rec}$;
    \item broader center-versus-boundary carrier distributions.
\end{itemize}

For such objects, the atlas records the first observed levels and the subsequent visible repetitions, but these data are interpreted descriptively rather than theorem-theoretically.

\begin{remark}
\label{rem:atlas-thresholds-caution}
An atlas threshold is not, by itself, a stable threshold. It is merely the first observed occurrence within the computed range. In several important cases, the atlas suggests persistence or recurrent behavior, but unless this persistence is supported by an explicit transport theorem or by a finitely witnessed overlay-monotone formulation, it remains empirical.
\end{remark}

\begin{remark}
\label{rem:threshold-role-in-paper}
Thresholds play two roles in the present paper. First, strict thresholds provide rigorous statements for overlay-monotone or finitely witnessed properties. Second, atlas thresholds provide a comparative language for morphogenesis across $n$, especially in the study of first appearances, repeated motifs, and regime changes that are currently supported only computationally.
\end{remark}

\section{Comparative growth and weak self-similarity}
\label{sec:comparative-growth}

The previous sections established a strict overlay mechanism for transporting finite induced structure across levels and a threshold formalism for finitely witnessed overlay-monotone properties. We now turn to a broader comparative question: how should one describe growth across the family
\[
G_1,G_2,G_3,\dots
\]
once one goes beyond a single motif or a single extremal parameter?

This section introduces a conservative comparative language for recurring motifs, carrier distributions, and weak self-similarity, while keeping rigorous recurrence separate from atlas-level resemblance.

\subsection{Comparative growth profiles}
\label{subsec:comparative-growth-profiles}

Let $(a_n)_{n\ge 1}$ be any numerical structural sequence associated with the partition graph family, such as
\[
|V(G_n)|,\qquad
\Delta_n,\qquad
\Omega_n,\qquad
S_n,
\]
or a size parameter attached to a distinguished subsystem. We call such a sequence a \emph{growth profile}.

\begin{definition}
\label{def:growth-profile}
A \emph{growth profile} is any sequence
\[
a_n\in \mathbb{R}_{\ge 0},\qquad n\ge 1,
\]
attached to the graphs $G_n$, either by definition or by computation. A \emph{comparative growth statement} is any assertion comparing two such profiles $(a_n)$ and $(b_n)$, either exactly, asymptotically, or empirically.
\end{definition}

In the present paper, comparative growth will be used in three ways:
\begin{enumerate}[leftmargin=2em]
    \item to compare theorem-level monotone profiles, such as $\Delta_n$, $\Omega_n$, and $S_n$;
    \item to compare computed structural counts attached to selected motif families;
    \item to compare carrier distributions across broad morphological zones.
\end{enumerate}

The first type belongs to strict theory. The second and third types are mostly atlas-level in the present paper, although they are guided by the strict overlay formalism established earlier.

\begin{remark}
\label{rem:profile-caution}
A growth profile need not be monotone, and even when it is monotone, the profile itself does not automatically encode the geometric mechanism behind the growth. For this reason, profile comparison will be used here as an organizational language rather than as a substitute for structural transport results.
\end{remark}

\subsection{Recurring motifs and persistent fragments}
\label{subsec:recurring-motifs}

The overlay mechanism implies that many local patterns recur indefinitely once they first appear. This is the most elementary source of repeated structure across the family $(G_n)$.

\begin{definition}
\label{def:recurrent-motif}
A finite rooted induced motif $M$ is called \emph{recurrent} if it occurs in $G_n$ for infinitely many values of $n$. It is called \emph{persistent} if there exists $n_0$ such that $M$ occurs in every $G_n$ for $n\ge n_0$.
\end{definition}

\begin{proposition}
\label{prop:persistent-implies-recurrent}
Every finite rooted induced motif that occurs at some level is persistent, and hence recurrent.
\end{proposition}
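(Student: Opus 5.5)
The plan is to deduce the statement directly from the Translation Overlay Theorem (Theorem~\ref{thm:translation-overlay}), in its rooted form Corollary~\ref{cor:finite-pattern-persistence}. The one point that needs care is that persistence requires occurrence at \emph{every} level $m\ge n_0$, not merely at infinitely many levels. So I will use translations of every possible size $k\ge 1$, not just a single family iterated.

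First fix a finite rooted induced motif $M$ with an induced rooted copy in some $G_{n_0}$. That is, there is an induced subgraph $H\subseteq G_{n_0}$ together with a root-preserving isomorphism $\varphi:M\to H$. Set this $n_0$ as the persistence index.

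Next, for any $m>n_0$ put $k=m-n_0\ge 1$ and choose any $\tau\vdash k$; the simplest concrete choice is $\tau=(k)$, giving the row-growth overlay $R_k$. By Theorem~\ref{thm:translation-overlay}, $T_\tau:G_{n_0}\to G_m$ is an injective induced embedding. Its restriction to $H$ is therefore an isomorphism onto the induced subgraph $T_\tau(H)\subseteq G_m$. Then $T_\tau\circ\varphi$ is an isomorphism $M\to T_\tau(H)$, and it carries the designated root data (a vertex or an ordered tuple) to the images of the roots. Hence $G_m$ contains an induced rooted copy of $M$, which is precisely the argument of Proposition~\ref{prop:motif-defined-overlay-monotone} applied to $\mathcal P_M$.

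The case $m=n_0$ holds trivially, so $M$ occurs in every $G_m$ with $m\ge n_0$, and $M$ is persistent in the sense of Definition~\ref{def:recurrent-motif}. Recurrence then follows because the set $\{m:m\ge n_0\}$ is infinite. I expect no real obstacle. The only thing to check is that each gap $k\ge 1$ admits some partition $\tau\vdash k$, which is obvious, and that rootedness is preserved, which is immediate because $T_\tau$ is an injective vertex map compatible with the chosen isomorphism.
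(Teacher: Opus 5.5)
Your proposal is correct and follows essentially the same route as the paper, which deduces the claim directly from the persistence mechanism of Section~\ref{sec:persistence}, namely the Translation Overlay Theorem applied to a fixed induced rooted copy. Your explicit choice of some $\tau\vdash m-n_0$ for each $m>n_0$ just spells out why the motif occurs at every higher level, which the paper's one-line proof states without further detail.
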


\begin{proof}
This is an immediate consequence of the persistence mechanism from Section~\ref{sec:persistence}: once a fixed rooted induced motif occurs at some level $G_n$, it occurs in every higher $G_m$, $m\ge n$.
\end{proof}

This already has a useful interpretive consequence: repeated local structure is not accidental in the partition graph family; it is built into the overlay geometry itself. What remains delicate is how far one may pass from repeated finite fragments to broader claims about similarity of larger structural zones.

\begin{remark}
\label{rem:fragment-vs-global}
Persistence of finite rooted fragments does not imply persistence of a whole global zone, nor does it imply that every later copy carries the same full morphological meaning. This distinction is especially important for axial and rear motifs, where theorem-safe rooted templates and stronger typed interpretations must be kept separate.
\end{remark}

\subsection{Weak self-similarity}
\label{subsec:weak-self-similarity}

The term \emph{self-similarity} is potentially misleading in the present setting. In this section it is used in two layers: a strict theorem-level sense based on exact recurrent rooted motifs, and a broader atlas-level sense based on repeated fragments and profiles. The family $(G_n)$ is not known, and is not claimed here, to admit any exact global replication rule comparable to fractal or substitution-type self-similarity. What is available is weaker and more local.

\begin{definition}
\label{def:weak-self-similarity}
We say that the family $(G_n)$ exhibits \emph{weak self-similarity} if some nontrivial class of finite rooted motifs or local structural fragments recurs across infinitely many levels, either
\begin{enumerate}[leftmargin=2em]
    \item as exact translated copies under strict overlays, or
    \item as explicitly tracked repeated fragments in the computational atlas.
\end{enumerate}
\end{definition}

This definition is intentionally conservative. It does not speak about exact global replication of whole graphs, nor about a universal transition map preserving all distinguished structural zones simultaneously.

Consequently, the partition graph family already exhibits a minimal theorem-level form of weak self-similarity in the strict sense of Definition~\ref{def:weak-self-similarity}: by Proposition~\ref{prop:persistent-implies-recurrent}, every finite rooted induced motif that appears once recurs at all higher levels. In particular, the canonical objects $BL1$, $BR1$, $P2^\circ$, $E1^\circ$, and $R_{sq}^\circ$ provide a nontrivial recurrent class.

This observation formalizes only the recurrence of finite rooted fragments under the strict overlay mechanism. It should be viewed as a minimal theorem-level foundation for the broader, more atlas-guided notion of weak self-similarity used later in the paper.

\begin{remark}
\label{rem:minimal-self-similarity}
The broader comparative use of the term \emph{weak self-similarity} in this paper remains heuristic and atlas-guided. The theorem-level statement above concerns only exact recurrence of finite rooted motifs.
\end{remark}

\subsection{Comparative zones and carrier distributions}
\label{subsec:carrier-distributions}

Beyond the recurrence of individual motifs, the computational atlas suggests a broader question: where, within $G_n$, are the main carriers of local complexity located as $n$ grows?

In earlier papers of the series, several distinguished structural zones were introduced: the boundary framework, the self-conjugate axis, the spine, the rear region, and various notions of central or interior structure \cite{Lyudogovskiy2026GrowingObject,Lyudogovskiy2026AxialMorphology,Lyudogovskiy2026BoundaryRear,Lyudogovskiy2026DirectionalGeometry}. The present paper does not attempt to impose a single global partition of $V(G_n)$ into such zones. Instead, it uses them comparatively.

\begin{definition}
\label{def:carrier-feature}
A \emph{carrier feature} is a rule that selects a subset of vertices of $G_n$ carrying a chosen structural property, for example:
\begin{enumerate}[leftmargin=2em]
    \item vertices of maximal degree;
    \item vertices of maximal local simplex dimension;
    \item vertices supporting a chosen motif family;
    \item vertices lying in a designated morphological zone.
\end{enumerate}
A \emph{carrier distribution profile} records the counts or relative frequencies of such vertices across the selected zones.
\end{definition}

In the present paper, carrier-distribution profiles are used descriptively rather than as part of the strict theory. They provide a language for comparing:
\begin{itemize}[leftmargin=2em]
    \item boundary-supported versus interior-supported complexity;
    \item axial versus rear motif concentration;
    \item localized first appearances versus later spreading or thickening.
\end{itemize}

\begin{remark}
\label{rem:carrier-distribution-status}
At this stage, carrier-distribution profiles should be viewed as atlas objects rather than strict invariants. They are useful for organizing the visual and computational evidence for morphogenesis across levels, but they do not yet carry a general theorem of transport or monotonicity.
\end{remark}

\subsection{Morphogenetic regimes}
\label{subsec:morphogenetic-regimes}

The comparative language above suggests several broad regimes of growth that are useful descriptively, even when no full theorem is available.

\paragraph{Boundary-first growth.}
Some of the earliest recurring motifs arise near the front framework and its corners. At theorem level this is represented by the canonical motifs $BL1$ and $BR1$; more broadly it is reflected in early boundary-dominated atlas thresholds.

\paragraph{Axial linking.}
Once short bridge motifs begin to appear, the atlas records the first repeated axial fragments. At theorem level this is represented only by the weak template $P2^\circ$; richer bridge types remain atlas-level.

\paragraph{Rear emergence.}
The rear morphology enters first through minimal rooted templates such as $E1^\circ$ and $R_{sq}^\circ$, and later through thicker or more diversified rear motifs.

\paragraph{Interior thickening.}
As $n$ increases, the atlas may suggest that higher local complexity is carried less exclusively by the earliest boundary structures and more often by internal configurations. In the present paper this remains a comparative observation rather than a theorem.

\begin{remark}
\label{rem:regime-language}
This regime language is descriptive. It organizes the atlas and the comparative discussion, but it is not intended as a complete phase theory of the family $(G_n)$.
\end{remark}

\subsection{Normalized comparison and repeated profiles}
\label{subsec:normalized-comparison}

A second source of weak self-similarity, beyond exact finite motifs, comes from repeated \emph{profiles} rather than repeated induced subgraphs.

\begin{definition}
\label{def:normalized-profile}
A \emph{normalized profile} is any quantity of the form
\[
\frac{a_n}{b_n},
\]
where $a_n$ and $b_n$ are growth profiles attached to $G_n$, chosen so as to make different levels comparable.
\end{definition}

Typical examples include normalized motif counts, normalized carrier-distribution profiles, and relative contributions of selected structural zones.

No general asymptotic claims are proved here for such normalized profiles. In the present paper they are used only as atlas-guided comparative tools, mainly to distinguish genuine repeated behavior from mere scale inflation.

\begin{remark}
\label{rem:normalized-profile-caution}
Normalized profiles belong to the exploratory layer of the paper. They are not inputs to the strict theorem package.
\end{remark}

\subsection{Summary of the comparative framework}
\label{subsec:comparative-summary}

The present section provides an intermediate language between the strict transport theory and the computational atlas.

Its main points are:
\begin{enumerate}[leftmargin=2em]
    \item comparative growth is described through growth profiles rather than through a single global invariant;
    \item every realized finite rooted motif is persistent and therefore yields a minimal form of weak self-similarity;
    \item broader repeated patterns may be tracked either as exact recurring fragments or as atlas-level repeated profiles;
    \item carrier distributions and morphogenetic regimes are useful comparative descriptors, but remain mostly empirical in the present paper.
\end{enumerate}

Thus the paper adopts a layered notion of self-similarity: strict recurrence of finite rooted fragments under overlays, and a broader atlas-level language of repeated motifs and recurring comparative profiles.

\section{Atlas framework, canonical registry, and comparative motif chronology}
\label{sec:atlas}

The strict transport theory developed in the previous sections provides a rigorous framework for finite rooted motifs and for extremal local complexity. The role of this section is broader. We record the current atlas framework, a compact canonical registry, and a comparative chronology for first appearances, recurring local fragments, and motif distributions across the family
\[
G_1,G_2,G_3,\dots
\]
while keeping explicit track of which observations are theorem-level and which remain empirical.

This is not intended as a complete tabulation of computed data. Rather, the section fixes the reporting framework for the broader atlas and records the compact layer of motif and chronology data that is used explicitly in the present paper.

Nothing in the present section extends the strict theorem package unless this is stated explicitly. Its role is documentary and comparative: to record computed first appearances, repeated fragments, and profile-level regularities suggested by the atlas.

In the present paper, the atlas has three roles.

First, it records the earliest observed occurrences of selected canonical motifs and of a small second layer of richer motifs not yet covered by the strict transport package. Second, it visualizes how exact translation overlays account for some repeated fragments but not for all visible similarities across levels. Third, it provides a comparative language for discussing morphogenetic regimes, especially the relative prominence of boundary, axial, rear, and interior carriers of local complexity.

\subsection{Atlas principles and scope}
\label{subsec:atlas-principles}

We do not attempt to record every visible pattern occurring in small partition graphs. Instead, the atlas is organized around a fixed list of representative objects chosen for their structural relevance and for their compatibility with the language developed earlier in the paper.

The atlas distinguishes two layers.

\paragraph{Theorem-safe canonical motifs.}
These are the motifs already incorporated into the threshold theory:
\[
BL1,\quad BR1,\quad P2^\circ,\quad E1^\circ,\quad R_{sq}^\circ,
\]
together with the simplex witnesses
\[
K_3,\quad K_4,\quad K_5.
\]
For these objects, the atlas records first appearances, but the long-range persistence is already guaranteed by the strict overlay mechanism or by the corresponding monotonicity theorem.

\paragraph{Atlas-only or atlas-primary motifs.}
These are richer or less canonical objects used primarily for comparative purposes:
\[
A1,\quad A2,\quad P3,\quad E2,\quad R_{rec},
\]
together with record-spectrum phenomena and carrier-distribution profiles. For these objects, the atlas records only the first observed occurrence in the computed range and any later visible repetitions. Unless separately justified, such observations are descriptive rather than theorem-theoretic.

\begin{remark}
\label{rem:atlas-data-status}
Throughout this section, the phrase \emph{atlas threshold} means the first observed level in the computed range at which a selected feature appears. It is not, by itself, a stable threshold in the sense of Section~\ref{sec:thresholds}.
\end{remark}

\subsection{Overlay diagrams for successive levels}
\label{subsec:overlay-diagrams}

The first component of the atlas consists of overlay diagrams for successive or nearby levels. Their role is diagnostic rather than merely illustrative: they show directly which repeated motifs are explained by exact translation transport and which require a weaker correspondence language.

In practice, four figure families are the most informative.

\paragraph{Row-growth overlays.}
These display the image of $G_n$ inside $G_{n+1}$ or $G_{n+2}$ under $R_1$ and $R_2$. They are especially effective for the left boundary framework, front-corner motifs, and selected local simplex witnesses.

\paragraph{Column-growth overlays.}
These are the conjugate companions of the row-growth figures and are the natural diagnostic tool for the right boundary structures.

\paragraph{Nontrivial Ferrers translations.}
A small number of overlays of type $T_\tau$ with nontrivial $\tau$, for example $(2,1)$ or $(2,2)$, show that the strict overlay language is not confined to one-step row or column growth.

\paragraph{Mixed comparison panels.}
These juxtapose an exact translated copy of a selected motif with a visually similar but not literally transported occurrence elsewhere in the same or a nearby graph. Such panels make the distinction between exact transport and heuristic resemblance especially transparent.

\begin{remark}
\label{rem:overlay-figure-role}
Overlay figures should be read as structural diagnostics. Their purpose is to make visible where the theorem-level transport theory applies and where the paper passes instead to atlas-level comparison.
\end{remark}

\subsection{Canonical first appearances}
\label{subsec:canonical-first-appearance}

The next component of the atlas is a compact registry of canonical motifs used throughout the paper. For the theorem-safe objects, the atlas does not replace the threshold theory; it records representative realizing levels and aligns them with the exact statements already proved in Section~\ref{sec:thresholds}.

\begin{table}[t]
\centering
\caption{Canonical theorem-safe motifs and their current threshold status. The entries record exact values where proved, and explicit realizing levels where only stable-threshold existence is established in the present paper.}
\label{tab:canonical-first-appearances}
\small
\begin{tabular}{llll}
\toprule
Code & Structural zone & Current threshold information & Status \\
\midrule
$BL1$ & left boundary corner & $n_*(BL1)=n_*^{\mathrm{st}}(BL1)=4$ & exact threshold proved \\
$BR1$ & right boundary corner & $n_*(BR1)=n_*^{\mathrm{st}}(BR1)=4$ & exact threshold proved \\
$P2^\circ$ & weak axial template & $n_*(P2^\circ)\le 8$ and $n_*^{\mathrm{st}}(P2^\circ)=n_*(P2^\circ)$ & stable threshold proved \\
$E1^\circ$ & weak rear-root template & $n_*(E1^\circ)\le 6$ and $n_*^{\mathrm{st}}(E1^\circ)=n_*(E1^\circ)$ & stable threshold proved \\
$R_{sq}^\circ$ & weak square-based contour template & $n_*(R_{sq}^\circ)\le 9$ and $n_*^{\mathrm{st}}(R_{sq}^\circ)=n_*(R_{sq}^\circ)$ & stable threshold proved \\
\bottomrule
\end{tabular}
\end{table}

The simplex witnesses $K_3$, $K_4$, and $K_5$ also belong to the broader background atlas of the project, but their detailed first-appearance chronology is treated more naturally in the local and simplex-oriented lines of the series \cite{Lyudogovskiy2026LocalMorphology,Lyudogovskiy2026SimplexStratification} rather than in the present overlay-centered paper.

\subsection{Atlas-primary motifs}
\label{subsec:atlas-primary-motifs}

Beyond the theorem-safe package, the atlas tracks a second layer of structurally richer objects. These remain descriptive in the present paper, but they are important for the chronology of morphogenesis.

\paragraph{Axial bridge motifs.}
The motifs $A1$ and $A2$ represent the first nontrivial and the first richer bridge-type configurations between self-conjugate anchors or near-axis vertices: $A1$ is the first short axial bridge pattern, while $A2$ denotes a visibly enriched variant with additional local support. Their first appearances help mark the internal chronology of axial organization; compare \cite{Lyudogovskiy2026AxialMorphology}.

\paragraph{Longer spine fragments.}
The motif $P3$ records the first appearance of a repeated path-like axial fragment longer than a single bridge template and is one of the main atlas indicators for weak self-similarity along the axial direction.

\paragraph{Nonminimal rear motifs.}
The motif $E2$ records a thicker or branched rear-root configuration obtained by enlarging the minimal triangle-type seed $E1^\circ$.

\paragraph{Nonsquare rectangular contour motifs.}
The motif $R_{rec}$ records a genuinely rectangular, nonsquare rear contour template and is one of the main indicators of rear diversification; compare \cite{Lyudogovskiy2026BoundaryRear}.

In the present paper we use these objects comparatively rather than theorem-theoretically. Their value lies in organizing the chronology of structural enrichment beyond the strict threshold package, not in promoting them prematurely to theorem status.

\begin{remark}
\label{rem:atlas-primary-conservative}
Atlas-primary motifs are recorded here as empirical comparison objects. Persistence or recurrence for such motifs should be stated only after an explicit rooted reformulation has been connected to the strict overlay theory.
\end{remark}

\subsection{Record phenomena and carrier distributions}
\label{subsec:record-phenomena}

A separate part of the atlas concerns new record values in local complexity and the location of the vertices that realize them.

The strict theory already proves that the extremal sequences
\[
\Delta_n,\qquad \Omega_n,\qquad S_n
\]
are nondecreasing. What remains atlas-level is the finer chronology: the concrete levels at which new record values first occur, how often they are realized, and in which broad structural zones their carriers lie.

For the purposes of the present paper, the most useful coarse zones are
\[
\text{boundary/front},\qquad
\text{axis-near},\qquad
\text{rear},\qquad
\text{interior remainder}.
\]
Within this scheme, the atlas is meant to track:
\begin{itemize}[leftmargin=2em]
    \item vertices attaining $\Delta_n$;
    \item vertices attaining $\Omega_n$ or $S_n$;
    \item roots or supports of selected motif families;
    \item shifts in the relative prominence of boundary, axial, rear, and interior carriers.
\end{itemize}

No general theorem is claimed here about carrier migration or zone dominance. These profiles are descriptive, but they are valuable because they help separate genuine record growth from the different question of how the supporting geometry of that growth changes across levels.

\subsection{Repeated fragments and atlas-level weak self-similarity}
\label{subsec:atlas-self-similarity}

The atlas gives a second, broader reading of weak self-similarity beyond the theorem-safe recurrence of fixed finite motifs. In particular, it records repeated fragments of the following kinds:
\begin{enumerate}[leftmargin=2em]
    \item repeated bridge-like axial shapes;
    \item repeated short spine segments;
    \item repeated rear-root or rectangular contour neighborhoods;
    \item repeated profile patterns in motif counts or carrier distributions.
\end{enumerate}

These repetitions are not all of the same nature. Some are exact translated copies under a known overlay; others are only visually or combinatorially similar after normalization or after forgetting part of the surrounding structure. The atlas is therefore the place where the distinction between exact recurrence and profile-level resemblance becomes concrete.

\begin{remark}
\label{rem:atlas-self-similarity}
The atlas does not collapse all repeated phenomena into a single notion of self-similarity. Rather, it documents a layered spectrum ranging from exact translated recurrence to weaker profile-level resemblance.
\end{remark}

\subsection{Atlas chronology and morphogenetic reading}
\label{subsec:atlas-chronology}

Taken together, the atlas data support a chronological reading of the family $(G_n)$, even though the paper does not claim a full phase theory. At the broadest level, the data suggest the following order of structural enrichment:
\begin{enumerate}[leftmargin=2em]
    \item early boundary and simplex witnesses;
    \item first axial bridges and short recurrent fragments;
    \item first minimal rear seeds and square-based contour templates;
    \item richer axial, rear, and rectangular motifs;
    \item broader redistribution of complexity carriers across the graph.
\end{enumerate}

This chronology should be read as a comparative atlas narrative, not as a theorem. Its value lies in unifying a range of local observations into a coherent growth picture: not a random accumulation of motifs, but a layered and partly repeatable morphogenesis across levels.

\begin{remark}
\label{rem:atlas-chronology-conservative}
The chronological language in this section remains intentionally cautious. The atlas suggests a sequence of emerging structural regimes, but the present paper does not attempt to prove that these regimes are universal, sharply separated, or asymptotically stable.
\end{remark}

\subsection{Output of the atlas}
\label{subsec:atlas-output}

The atlas serves as the empirical complement to the strict core of the paper. Its main outputs are:
\begin{enumerate}[leftmargin=2em]
    \item overlay diagrams showing which repeated motifs arise by exact translation transport;
    \item a compact canonical registry separating proved threshold statements from representative realizing levels;
    \item an atlas-primary layer recording richer axial, rear, and rectangular motifs beyond the strict package;
    \item a comparative language for record chronology, carrier distributions, and repeated fragments;
    \item a cautious morphogenetic narrative connecting these observations across levels.
\end{enumerate}

In this way, the atlas does not compete with the strict transport theory. It extends the comparative reach of the paper while keeping explicit track of where the current evidence is theorem-level and where it remains computational.

\section{Conclusion and open problems}
\label{sec:conclusion}

In this paper we studied the partition graphs $G_n$ as a growing family of discrete geometric objects and introduced a formal language for structural comparison across different levels. The main contribution is the translation-overlay mechanism: for every fixed partition $\tau\vdash k$, the Ferrers translation
\[
T_\tau:G_n\to G_{n+k}
\]
is an induced graph embedding. This provides a strict notion of overlay and, with it, a rigorous persistence principle for finite rooted induced motifs.

A second contribution is the resulting threshold formalism. For overlay-monotone finitely witnessed properties, first appearance is automatically stable. In particular, this applies to fixed rooted induced templates and to lower bounds on the extremal local invariants $\Delta_n$, $\Omega_n$, and $S_n$. Thus the paper identifies a natural class of threshold phenomena that belong to strict theory rather than to visual or computational analogy.

A third contribution is the use of a small canonical motif package connecting several structural zones of the partition graph family. The motifs $BL1$ and $BR1$ provide exact boundary-corner thresholds, while the weak rooted templates $P2^\circ$, $E1^\circ$, and $R_{sq}^\circ$ supply theorem-safe representatives of axial, rear-root, and square-based rear-contour behavior. This gives a compact theorem-bearing core that already reaches beyond purely local clique or degree phenomena.

At the same time, the paper deliberately separates this strict threshold theory from a broader atlas layer. The atlas framework recorded here organizes the canonical registry together with richer bridge types, longer spine fragments, nonminimal rear motifs, record chronology, and carrier-distribution patterns. These data support a comparative language of morphogenesis across $n$, but they are interpreted conservatively. In particular, we distinguish exact translated persistence from atlas thresholds, and theorem-level weak self-similarity from broader profile-level resemblance.

The present paper is best viewed as a structural and methodological step. It does not offer a complete theory of global self-similarity or a universal transport rule for all distinguished subsystems of $G_n$. What it does provide is a strict overlay language, a persistence mechanism for finite structure, a threshold theory for a well-defined class of properties, and an atlas framework for organizing the broader morphology of growth across levels.

We conclude with several open problems suggested by this framework.

\begin{problem}[Exact first appearances of theorem-safe templates]
\label{prob:exact-first-appearances}
Determine the exact first-appearance levels of the theorem-safe weak templates
\[
P2^\circ,\qquad E1^\circ,\qquad R_{sq}^\circ,
\]
rather than the upper bounds recorded in the present paper.
\end{problem}

\begin{problem}[Typed transport versus weak template persistence]
\label{prob:typed-transport}
For motif classes with a strong morphological interpretation---for example axial bridge motifs, rectangular rear-root motifs, or square-based contour motifs---determine which typed versions are genuinely preserved under specific translation families and which admit only weak rooted-template persistence.
\end{problem}

\begin{problem}[Directional transport of distinguished subsystems]
\label{prob:directional-transport}
Develop a sharper transport theory for distinguished global subsystems such as the boundary framework, the self-conjugate axis, the spine, and the rear contour. In particular, determine whether there exist natural transport rules that preserve substantial parts of these subsystems beyond the finite-template level, and clarify the role of directional asymmetry in such rules.
\end{problem}

\begin{problem}[Atlas thresholds for richer motif families]
\label{prob:atlas-thresholds}
Compute and classify the first-appearance levels of the atlas-primary motifs
\[
A1,\qquad A2,\qquad P3,\qquad E2,\qquad R_{rec},
\]
and determine which of these admit a reformulation as finitely witnessed overlay-monotone properties.
\end{problem}

\begin{problem}[Carrier migration and morphogenetic regimes]
\label{prob:carrier-migration}
Make precise the comparative regime language suggested by the atlas. In particular, determine whether one can prove meaningful statements about the migration of extremal or motif-carrying vertices between boundary, axial, rear, and interior zones as $n$ grows.
\end{problem}

\begin{problem}[Normalized profiles and asymptotic comparison]
\label{prob:normalized-profiles}
Identify normalized structural profiles for which meaningful asymptotic behavior can be established. Examples include normalized motif counts, normalized carrier-distribution profiles, and comparative size parameters attached to boundary, axial, rear, and interior subsystems.
\end{problem}

\begin{problem}[Weak self-similarity beyond finite fragments]
\label{prob:weak-self-similarity}
Clarify whether the weak self-similarity established here at the level of persistent finite motifs extends to a stronger asymptotic or large-scale form. In particular, determine which repeated atlas-level patterns reflect genuine higher-order structure and which are only artifacts of finite-range visualization.
\end{problem}

These questions point toward a broader program in which the family $(G_n)$ is studied not only graph by graph, but as a coherent morphogenetic system with interacting local, axial, boundary, rear, and interior modes of growth. The present paper provides a first strict framework for this viewpoint, while leaving open the problem of how much of the visible comparative geometry can ultimately be converted into exact structural theory.

\section*{Acknowledgements}
The author acknowledges the use of ChatGPT (OpenAI) for discussion, structural planning, and editorial assistance during the preparation of this manuscript. All mathematical statements, proofs, computations, and final wording were checked and approved by the author, who takes full responsibility for the contents of the paper.


\begin{thebibliography}{99}

\bibitem{Andrews1998}
G. E. Andrews,
\emph{The Theory of Partitions},
Cambridge Mathematical Library, Cambridge University Press, Cambridge, 1998.

\bibitem{Savage1989}
C. D. Savage,
Gray code sequences of partitions,
\emph{Journal of Algorithms} \textbf{10} (1989), no.~4, 577--595.

\bibitem{RasmussenSavageWest1995}
D. Rasmussen, C. D. Savage, and D. B. West,
Gray code enumeration of families of integer partitions,
\emph{Journal of Combinatorial Theory, Series A} \textbf{70} (1995), no.~2, 201--229.

\bibitem{Mutze2023}
T. M\"utze,
Combinatorial Gray codes---an updated survey,
\emph{Electronic Journal of Combinatorics} \textbf{30} (2023), no.~3, Dynamic Survey DS26.

\bibitem{Bal2022}
H. S. Bal,
Lognormal degree distribution in the partition graphs,
arXiv:2202.09819 [math.CO], 2022.

\bibitem{Lyudogovskiy2026CliqueComplex}
F. B. Lyudogovskiy,
\emph{The homotopy type of the clique complex of the partition graph},
\href{https://arxiv.org/abs/2603.14370}{arXiv:2603.14370} [math.CO], 2026.

\bibitem{Lyudogovskiy2026LocalMorphology}
F. B. Lyudogovskiy,
\emph{Local Morphology of the Partition Graph},
\href{https://arxiv.org/abs/2603.18696}{arXiv:2603.18696} [math.CO], 2026.

\bibitem{Lyudogovskiy2026GrowingObject}
F. B. Lyudogovskiy,
\emph{The Partition Graph as a Growing Discrete Geometric Object},
\href{https://arxiv.org/abs/2603.21221}{arXiv:2603.21221} [math.CO], 2026.

\bibitem{Lyudogovskiy2026AxialMorphology}
F. B. Lyudogovskiy,
\emph{Axial Morphology of the Partition Graph: Self-Conjugate Axis, Spine, and Concentration},
\href{https://arxiv.org/abs/2603.22546}{arXiv:2603.22546} [math.CO], 2026.

\bibitem{Lyudogovskiy2026SimplexStratification}
F. B. Lyudogovskiy,
\emph{Simplex Stratification and Phase Boundaries in the Partition Graph},
\href{https://arxiv.org/abs/2603.23228}{arXiv:2603.23228} [math.CO], 2026.

\bibitem{Lyudogovskiy2026DegreeLandscape}
F. B. Lyudogovskiy,
\emph{The Degree Landscape of the Partition Graph: Maximal Degree, Extremal Vertices, and Spectra},
\href{https://arxiv.org/abs/2603.24141}{arXiv:2603.24141} [math.CO], 2026.

\bibitem{Lyudogovskiy2026BoundaryRear}
F. B. Lyudogovskiy,
\emph{Boundary Framework, Rear Morphology, and Rectangular Contours in the Partition Graph},
preprint, 2026.

\bibitem{Lyudogovskiy2026DirectionalGeometry}
F. B. Lyudogovskiy,
\emph{Directional Geometry and Anisotropy in the Partition Graph},
preprint, 2026.

\end{thebibliography}
\end{document}